\theoremstyle{plain}
\newtheorem{theorem}{Theorem}[section]
\newtheorem{maintheorem}{Theorem}
\newtheorem{proposition}[theorem]{Proposition}
\newtheorem{lemma}[theorem]{Lemma}
\theoremstyle{definition}
\newtheorem{definition}[theorem]{Definition}
\newtheorem{remark}[theorem]{Remark}
\newcommand{\nc}{\newcommand}
\nc{\dmo}{\DeclareMathOperator}
\nc{\Q}{\mathbb{Q}}
\nc{\F}{\mathbb{F}}
\nc{\R}{\mathbb{R}}
\nc{\Z}{\mathbb{Z}}
\nc{\C}{\mathbb{C}}
\nc{\Ell}{\mathcal{L}}
\nc{\M}{\mathcal{M}}
\nc{\K}{\mathcal{K}}
\nc{\I}{\mathcal{I}}
\nc{\T}{\mathcal T}
\nc{\U}{\mathcal U}
\nc{\disk}{\mathbb{D}}
\nc{\hyp}{\mathbb{H}}
\nc{\CP}{\mathbb{CP}}
\nc{\cS}{\mathcal{S}}
\dmo{\Mod}{Mod}
\dmo{\PMod}{PMod}
\dmo{\LMod}{LMod}
\dmo{\Diff}{Diff}
\dmo{\Homeo}{Homeo}
\dmo{\dist}{dist}
\dmo\BDiff{BDiff}
\dmo\SO{SO}
\dmo\Hom{Hom}
\dmo\SL{SL}
\dmo\Sp{Sp}
\dmo\rank{rank}
\dmo\sig{sig}
\dmo\Out{Out}
\dmo\Aut{Aut}
\dmo\Inn{Inn}
\dmo\GL{GL}
\dmo\PSL{PSL}
\dmo\BHomeo{BHomeo}
\dmo\EHomeo{EHomeo}
\dmo\EDiff{EDiff}
\nc\Sig{\Sigma}
\dmo\Teich{Teich}
\dmo\Fix{Fix}
\nc{\pair}[1]{\langle #1 \rangle}
\nc{\abs}[1]{\left| #1 \right|}
\nc{\action}{\circlearrowright}
\nc{\norm}[1]{\left | \left | #1 \right | \right |}
\nc{\abcd}[4]{\left(\begin{array}{cc} #1 & #2 \\ #3 & #4 \end{array}\right)}
\dmo{\Isom}{Isom}
\nc{\normal}{\vartriangleleft}
\dmo{\Vol}{Vol}
\dmo{\im}{Im}
\dmo{\Push}{Push}
\dmo{\Conf}{Conf}
\dmo{\PConf}{PConf}
\dmo{\id}{id}
\dmo{\Jac}{Jac}
\dmo{\Pic}{Pic}
\dmo{\Stab}{Stab}
\dmo{\Arf}{Arf}
\dmo{\End}{End}
\dmo{\Gal}{Gal}
\dmo{\lcm}{lcm}
\dmo{\ab}{ab}
\dmo{\opp}{op}
\dmo{\SU}{SU}
\dmo{\OT}{\Omega \mathcal{T}}
\dmo{\OM}{\Omega \mathcal{M}}
\dmo{\spin}{spin}
\dmo{\even}{even}
\dmo{\odd}{odd}
\dmo{\comp}{\mathcal{H}}
\dmo{\Mgk}{\mathcal{M}_{g, \underline{\kappa}}}
\dmo{\orb}{orb}
\dmo{\AJ}{AJ}
\dmo{\Ck}{\mathsf{C}(\underline{\kappa})}
\dmo{\ord}{ord}
\dmo{\Int}{Int}
\dmo{\pr}{pr}
\dmo{\lab}{lab}
\dmo{\Sym}{Sym}
\dmo{\Div}{Div}
\dmo{\RelAut}{PRelAut}
\dmo{\RelArf}{RelArf}
\dmo{\PB}{PB}
\dmo\PAut{PAut}
\nc{\Span}[1]{\operatorname{Span}(#1)}
\newcommand{\onto}{\twoheadrightarrow}
\newcommand{\sing}{\underline{\kappa}}
\renewcommand{\epsilon}{\varepsilon}
\renewcommand{\tilde}{\widetilde}
\renewcommand{\le}{\leqslant}
\nc{\coloneq}{\mathrel{\mathop:}\mkern-1.2mu=}
\nc{\margin}[1]{\marginpar{\scriptsize #1}}
\nc{\para}[1]{\medskip\noindent\textbf{#1.}}
\nc{\red}[1]{\textcolor{red}{#1}}
\nc{\blue}[1]{\textcolor{blue}{#1}}
\nc{\proofof}[1]{\noindent {\em Proof (of #1).}}
\nc{\lb}{[}
\nc{\rb}{]}
\title[Relative homological representations of framed mapping class groups]{Relative homological representations of \\framed mapping class groups}
\author{Aaron Calderon and Nick Salter}
\email{aaron.calderon@yale.edu}
\email{nks@math.columbia.edu}
\thanks{AC is supported by NSF Award No. DGE-1122492. NS is supported by NSF Award No. DMS-1703181.}
\address{AC: Department of Mathematics, Yale University, 10 Hillhouse Ave, New Haven, CT 06511}
\address{NS: Department of Mathematics, Columbia University, 2990 Broadway, New York, NY 10027}
\date{June 25, 2020}
\begin{document}
\begin{abstract}
Let $\Sigma$ be a surface with either boundary or marked points, equipped with an arbitrary framing. In this note we determine the action of the associated ``framed mapping class group'' on the homology of $\Sigma$ relative to its boundary (respectively marked points), describing the image as the kernel of a certain crossed homomorphism related to classical spin structures. Applying recent work of the authors, we use this to describe the monodromy action of the orbifold fundamental group of a stratum of abelian differentials on the relative periods.
\end{abstract}
\vspace*{-2.5em}
\maketitle
\vspace*{-1em}

\section{Introduction}
Let $(\Sigma_g,Z)$ be a surface endowed with a nonempty finite set of marked points; we assume throughout that $g \ge 2$ unless otherwise specified. A {\em framing} of $(\Sigma_g,Z)$ is a trivialization of the tangent bundle of $\Sigma_g\setminus Z$; up to homotopy this is specified by a vector field vanishing only at $Z$. We say that two framings $\phi$ and $\psi$ are {\em isotopic} if the corresponding vector fields are isotopic through vector fields vanishing only at $Z$. The (pure) mapping class group $\PMod(\Sigma_g,Z)$ of the marked surface $(\Sigma_g,Z)$ admits a well--defined action on the set of isotopy classes of framings, and we define the {\em framed mapping class group} as the stabilizer of a chosen (isotopy class of) framing $\phi$:
\[
\PMod(\Sigma_g,Z)[\phi] := \{f \in \PMod(\Sigma_g,Z) \mid f\cdot \phi = \phi \mbox{ up to isotopy}\}.
\]
One of the most basic tools in the study of mapping class groups is the {\em homological representation} via its action on the first homology of the surface. In the presence of marked points, we can define the {\em relative homological representation}
\[
\Psi^{rel}: \PMod(\Sigma_g, Z) \to \PAut(H_1(\Sigma_g, Z;\Z)),
\]
where $\PAut(H_1(\Sigma_g, Z;\Z))$ is the ``pure automorphism group'' of $H_1(\Sigma_g, Z;\Z)$; see Section \ref{section:structure}. In this note we determine the action of $\PMod(\Sigma_g,Z)[\phi]$ on $H_1(\Sigma_g,Z;\Z)$ via $\Psi^{rel}$.

Recall that a {\em crossed homomorphism} is a map $f: G \to A$ where $G$ is a group and $A$ is a $\Z[G]$--module such that $f(g_1g_2) = f(g_1) + g_1 f(g_2)$ for all elements $g_1, g_2 \in G$. Kernels of crossed homomorphisms are subgroups but are not necessarily {\em normal} subgroups. 

\begin{maintheorem}\label{theorem:main}
For $g \ge 2$, let $\phi$ be a framing of the marked surface $(\Sigma_g, Z)$. Then there is a crossed homomorphism 
\[
\Theta_\phi: \PAut(H_1(\Sigma_g, Z; \Z)) \to H^1(\Sigma_g;\Z/2\Z)
\]
such that 
\[
\Psi^{rel}(\PMod(\Sigma_g, Z)[\phi]) = \ker (\Theta_\phi).
\]
\end{maintheorem}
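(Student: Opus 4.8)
The plan is to build $\Theta_\phi$ out of the classical spin structure underlying $\phi$ and thereby reduce the theorem to a statement about the symplectic part of the relative homology action. Recall that $\phi$ has an associated winding number function on isotopy classes of simple closed curves, and that its mod-$2$ reduction descends to a well-defined quadratic form $q_\phi \colon H_1(\Sigma_g;\Z/2\Z) \to \Z/2\Z$ refining the intersection pairing, so that $q_\phi(u+v) = q_\phi(u) + q_\phi(v) + \langle u, v\rangle$. A pure automorphism $A$ of $H_1(\Sigma_g,Z;\Z)$ preserves the image of $H_1(\Sigma_g;\Z) \hookrightarrow H_1(\Sigma_g,Z;\Z)$ and acts there symplectically; write $p \colon \PAut(H_1(\Sigma_g,Z;\Z)) \to \Sp(2g,\Z)$ for the resulting homomorphism and $B = p(A)$. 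I would then define
\[
\Theta_\phi(A)(v) \ :=\ q_\phi(B^{-1} v) - q_\phi(v) \pmod 2, \qquad v \in H_1(\Sigma_g;\Z/2\Z).
\]
Using that $B$ is symplectic together with the quadratic identity, a one-line computation shows $v \mapsto \Theta_\phi(A)(v)$ is linear, so $\Theta_\phi(A) \in \Hom(H_1(\Sigma_g;\Z/2\Z),\Z/2\Z) = H^1(\Sigma_g;\Z/2\Z)$; splitting $q_\phi(B_2^{-1}B_1^{-1}v) - q_\phi(v)$ through the intermediate term $q_\phi(B_1^{-1}v)$ verifies the crossed-homomorphism identity $\Theta_\phi(A_1A_2) = \Theta_\phi(A_1) + A_1 \cdot \Theta_\phi(A_2)$. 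By construction $\Theta_\phi$ factors through $p$, so $\ker(\Theta_\phi) = p^{-1}\big(\Sp(2g,\Z)[q_\phi]\big)$ is exactly the set of pure automorphisms whose absolute part preserves $q_\phi$, with arbitrary relative part. The easy inclusion $\Psi^{rel}(\PMod(\Sigma_g,Z)[\phi]) \subseteq \ker(\Theta_\phi)$ is then immediate: if $f$ fixes $\phi$ up to isotopy it preserves winding numbers, hence $q_\phi$, so $B = p(\Psi^{rel}(f))$ satisfies $\Theta_\phi(\Psi^{rel}(f)) = 0$.

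\textbf{The reverse inclusion.} This is the heart of the matter. The goal is to realize every $A \in \ker(\Theta_\phi)$ as $\Psi^{rel}(f)$ with $f$ fixing $\phi$, and I would proceed from a generating set for the framed mapping class group: by the authors' earlier work $\PMod(\Sigma_g,Z)[\phi]$ is generated by Dehn twists about admissible curves $c$ (those with $\phi(c) = 0$), together with a controlled set of framing-preserving maps supported near $Z$. On absolute homology such a twist acts as the transvection $v \mapsto v + \langle v,[c]\rangle[c]$, and since $\phi(c) = 0$ forces $q_\phi([c]) = 1$, these transvections lie in $\Sp(2g,\Z)[q_\phi]$. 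Invoking the classical fact that transvections about $q_\phi$-nonisotropic classes generate the stabilizer $\Sp(2g,\Z)[q_\phi]$, together with a change-of-coordinates principle realizing every $v$ with $q_\phi(v) = 1$ by an admissible curve, shows that the image surjects onto the symplectic part of $\ker(\Theta_\phi)$. It remains to fill in the relative part $\ker(p) \cong H_1(\Sigma_g;\Z)^{\oplus(|Z|-1)}$; I would produce these from differences $T_c T_{c'}^{-1}$ of twists about \emph{homologous} admissible curves $c, c'$, which act trivially on absolute homology (the transvection depends only on $[c] = [c']$) while translating the relative classes by a prescribed absolute class.

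\textbf{Main obstacle.} The principal difficulty is this last surjectivity onto the full kernel -- in particular checking that the framing-preserving maps generate \emph{all} of the relative part $\ker(p)$ rather than a proper subgroup, and doing so uniformly down to $g = 2$. An equivalent route I would keep in reserve is to use surjectivity of the relative homological representation of the full pure mapping class group (Section \ref{section:structure}) to lift a given $A \in \ker(\Theta_\phi)$ to some $f \in \PMod(\Sigma_g,Z)$, and then to correct $f$ within its $\Psi^{rel}$-fiber by an element of $\ker(\Psi^{rel})$; here the crux becomes showing that $f^{-1}\cdot\phi$ and $\phi$ lie in the same $\ker(\Psi^{rel})$-orbit of framings, which again reduces exactly to the vanishing of the spin obstruction recorded by $\Theta_\phi$. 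Either way, the delicate point is the bookkeeping relating the integral winding-number data of $\phi$ to its mod-$2$ reduction, ensuring that nothing beyond $q_\phi$ obstructs the realization.
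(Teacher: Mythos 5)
There is a genuine gap, and it is in the very first step: the claim that the mod-$2$ reduction of the winding number function descends to a quadratic form $q_\phi$ on $H_1(\Sigma_g;\Z/2\Z)$ is false for a general framing of $(\Sigma_g,Z)$. Two simple closed curves in $\Sigma_g\setminus Z$ that are homologous mod $2$ in $\Sigma_g$ differ by an element of the level-$2$ group, which is generated by squared twists and point-pushes $T_{a_i}T_{a_i'}^{-1}$ around the marked points; twist-linearity and homological coherence give $\phi(T_{a_i}T_{a_i'}^{-1}(c)) = \phi(c) + \kappa_i\langle [a_i],c\rangle$ where $\kappa_i = -1-\phi(\Delta_i)$. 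So whenever some $\kappa_i$ is odd, pushing $c$ across the marked point $p_i$ along a loop meeting $c$ once changes $\phi(c)\bmod 2$ without changing $[c]\in H_1(\Sigma_g;\Z/2\Z)$, and your $q_\phi$ is not well-defined. The paper circumvents this by working with the \emph{difference} $\Delta_\phi(f,c)=\phi(f(c))-\phi(c)\bmod 2$, which does descend to mod-$2$ homology in $c$ (Lemma \ref{proposition:oddtrace}) even though $\phi\bmod 2$ itself does not; only when all $\kappa_i$ are even does $\Theta_\phi$ collapse to the pullback of $\hat q$ as you describe (Lemma \ref{lemma:reven}).

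The error is not merely one of construction: the theorem as you would prove it is false when some $\kappa_i$ is odd. Your $\Theta_\phi$ factors through $p\colon\PAut(H_1(\Sigma_g,Z;\Z))\to\Sp(2g,\Z)$, so your kernel contains all of $\ker(p)=\RelAut(H_1(\Sigma_g,Z;\Z))$. But by Lemma \ref{lemma:ptpush} the true crossed homomorphism restricts on $\RelAut(H_1(\Sigma_g,Z;\Z))$ to the nontrivial map $v_{\sing}^*$, i.e.\ the image of the framed group genuinely constrains the \emph{relative} part of the action: a point-push around $p_i$ with $\kappa_i$ odd along a nonseparating loop lies in $\ker(p)$ but not in $\Psi^{rel}(\PMod(\Sigma_g,Z)[\phi])$. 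So your homologous-twist-difference construction cannot fill in all of $\ker(p)$, and the "main obstacle" you flag is in fact an obstruction. Your reserve route for the reverse inclusion (lift an arbitrary $A\in\ker(\Theta_\phi)$ to some $f$ and correct it by an element of $\ker(\Psi^{rel})$ using a classification of Torelli orbits of framings) is precisely what the paper does (Propositions \ref{proposition:Taction} and \ref{proposition:piece1}), but it only works once $\Theta_\phi$ has been defined so as to record the relative obstruction as well.
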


An explicit description of $\Theta_\phi$ appears at the end of the introduction; see Section \ref{section:Xhom} for full details.

\para{Strata of translation surfaces}
Our main application of Theorem \ref{theorem:main} is to give an explicit description of the homological monodromy groups of strata of abelian differentials. To formulate our results, we recall that the moduli space $\Omega \mathcal M_g$ of genus $g$ abelian differentials is divided into {\em strata} according to the combinatorics of the zero locus. If $\sing = (\kappa_1, \dots, \kappa_n)$ is a partition of $2g-2$ then we use $\Omega \mathcal M_g(\sing)$ to denote the set of all abelian differentials with $n$ zeros of orders $\kappa_1, \ldots, \kappa_n$.

Over every connected component $\mathcal H$ of a stratum one can define a vector bundle (in the orbifold sense) $H_1^{\textit{rel}}$ whose fiber over a manifold point $(X, \omega)$ is the relative homology group $H_1(X, \Div(\omega); \R)$. The (orbifold) fundamental group of $\mathcal H$ therefore admits a monodromy action on this bundle
\[
\rho_H : \pi_1^{\text{orb}} ( \mathcal H) \rightarrow \PAut(H_1(X, \Div(\omega); \R)).
\]
We observe that $\pi_1^{\text{orb}} ( \mathcal H)$ may permute the zeros of $\Div(\omega)$; set $\widehat{\mathcal H} \rightarrow \mathcal H$ to be the finite, connected
\footnote{See, e.g., \cite[Proposition 4.1]{Boissy_Rauzy} for a proof that this cover is connected.}
cover of $\mathcal H$ associated to the kernel of this action. The bundle $H_1^{\text{rel}}$ pulls back under this covering, and we let $\Gamma_{\widehat{\mathcal H}}$ denote the image of the monodromy homomorphism restricted to $\pi_1^{\text{orb}} (\widehat{\mathcal H})$.

By deep results of Eskin, Filip, and Wright \cite{EFW_HullKZcocycle}, the Zariski closure of $\Gamma_{\widehat{\mathcal H}}$ is equal to $\Sp(2g, \R) \ltimes \R^{n-1}$ (i.e., it is ``as big as possible'' given the constraint arising from the intersection pairing on absolute homology). The action of $\Gamma_{\widehat{\mathcal H}}$ on absolute homology was determined by Gutierrez-Romo \cite{GR_RVgroups}, but explicit characterizations of the full group $\Gamma_{\widehat{\mathcal H}}$ were only known for hyperelliptic components of strata \cite{AMY_hyperRV}[Corollary 2.8] and for the non-hyperelliptic components of $\Omega \mathcal M_g(g-1, g-1)$ (and $\Omega \mathcal M_g(2g-2)$) \cite{GR_RVgroups}[Theorem 5.1].

Together with recent work of the authors computing the image of an associated ``topological monodromy homomorphism'' (see just below), Theorem \ref{theorem:main} allows us to generalize the computations listed above, identifying $\Gamma_{\widehat{\mathcal H}}$ in terms of the crossed homomorphism $\Theta_\phi$.

\begin{maintheorem}\label{theorem:corollary}
Let $\sing = (\kappa_1, \dots, \kappa_n)$ be a partition of $2g-2$ with $g \ge 5$ and let $\mathcal H$ be a non-hyperelliptic component of the stratum $\Omega\mathcal M_g(\sing)$. 
Let $\widehat{\mathcal H}$ be the cover of $\mathcal H$ corresponding to the kernel of the permutation action on the zeros and choose a basepoint $(X, \omega) \in  \widehat{\mathcal H}$. Let $\phi$ be the induced framing of $(X, \Div(\omega))$.
Then the homological monodromy group $\Gamma_{\widehat{\mathcal H}} \le \PAut(H_1(X, \Div(\omega); \Z))$ is computed to be
\[
\Gamma_{\widehat{\mathcal H}} = \ker (\Theta_\phi)
\]
for $\Theta_\phi$ the crossed homomorphism of Theorem \ref{theorem:main}.
\end{maintheorem}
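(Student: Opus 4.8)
The plan is to realize $\rho_H$ as a composite that passes through the mapping class group, and then to feed the image of that composite into Theorem~\ref{theorem:main}. First I would set up the factorization. A point $(X, \omega) \in \widehat{\mathcal H}$ determines a translation structure on $X$ whose horizontal vector field (dually, the vector field annihilated by $\Re(\omega)$) vanishes exactly on $\Div(\omega)$, with a zero of order $\kappa_i$ at the corresponding zero of $\omega$; this is precisely the induced framing $\phi$ of $(X, \Div(\omega))$. Transporting the underlying topological data around loops in $\widehat{\mathcal H}$ produces a \emph{topological monodromy homomorphism}
\[
\rho \colon \pi_1^{\orb}(\widehat{\mathcal H}) \to \PMod(\Sigma_g, Z),
\]
whose image lies in the \emph{pure} mapping class group exactly because $\widehat{\mathcal H}$ was defined to kill the permutation action on the zeros. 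By construction $\rho_H = \Psi^{rel} \circ \rho$, since a loop acts on $H_1(X, \Div(\omega); \Z)$ through its induced mapping class.

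Next I would identify $\operatorname{im}(\rho)$ with the framed mapping class group $\PMod(\Sigma_g, Z)[\phi]$. One inclusion is immediate: the framing $\phi$ varies continuously over $\widehat{\mathcal H}$, so every monodromy mapping class fixes its isotopy class, whence $\operatorname{im}(\rho) \subseteq \PMod(\Sigma_g, Z)[\phi]$. The reverse inclusion is the substantive input and is supplied by the authors' recent computation of the topological monodromy of non-hyperelliptic components: for $g \ge 5$ the homomorphism $\rho$ surjects onto $\PMod(\Sigma_g, Z)[\phi]$. This is precisely where the hypothesis $g \ge 5$ enters, whereas Theorem~\ref{theorem:main} itself needs only $g \ge 2$.

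With both pieces in place the conclusion is a formal assembly:
\[
\Gamma_{\widehat{\mathcal H}} = \operatorname{im}(\rho_H) = \Psi^{rel}\big(\operatorname{im}(\rho)\big) = \Psi^{rel}\big(\PMod(\Sigma_g, Z)[\phi]\big) = \ker(\Theta_\phi),
\]
the last equality being Theorem~\ref{theorem:main}. The hard part is not this composition but the two compatibility checks underneath it: that the framing $\phi$ governing the monodromy (the one induced by $\omega$) is genuinely the framing to which Theorem~\ref{theorem:main} applies, and that $\rho$ really surjects onto the entire framed mapping class group rather than a proper subgroup. A last minor point to verify is the coefficient ring: since the monodromy preserves the integral lattice $H_1(X, \Div(\omega); \Z)$, the computation originally phrased with $\R$-coefficients descends to $\Z$-coefficients, matching the statement of Theorem~\ref{theorem:main}.
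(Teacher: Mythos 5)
Your proposal is correct and follows exactly the paper's argument: factor $\rho_H = \Psi^{rel}\circ\rho$ through the topological monodromy, invoke Theorem A of \cite{strata3} to identify $\operatorname{im}(\rho)$ with $\PMod(X,\Div(\omega))[\phi]$ (the sole place $g\ge 5$ is used), and apply Theorem \ref{theorem:main}. The additional remarks on the induced framing and the coefficient ring are sensible sanity checks but do not change the route.
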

\begin{proof}
We observe that there is a family of smooth curves $\mathcal X \rightarrow \widehat{\mathcal H}$ whose fiber over $(X, \omega)$ is $X$; the monodromy of this family therefore gives rise to a {\em topological monodromy homomorphism}
\[\rho: \pi_1^{\text{orb}} ( \widehat{\mathcal H}) \rightarrow \PMod(X, \Div(\omega)).\]
Theorem A of \cite{strata3} computes that the image of $\rho$ is exactly $\PMod(X, \Div(\omega))[\phi]$ (for $g \ge 5$). Now we observe that the homological monodromy representation factors through the topological monodromy via $\rho_H = \Psi^{rel} \circ \rho$. Applying Theorem \ref{theorem:main} yields the desired statement.
\end{proof}

\para{Relatively framed mapping class groups} Theorem \ref{theorem:main} is deduced from a somewhat stronger statement, which can also be used to supply some more information about strata. Suppose now that $\Sigma_{g,n}$ is a surface with $n \ge 1$ boundary components; then a nonvanishing vector field on $\Sigma_{g,n}$ gives rise to a framing $\phi$ of $\Sigma_{g,n}$. We say that framings $\phi$ and $\psi$ are {\em relatively isotopic} if the associated vector fields are isotopic through an isotopy which is trivial on $\partial \Sigma_{g,n}$. The mapping class group $\Mod(\Sigma_{g,n})$ admits a well-defined action on the set of relative isotopy classes of framings, and we can define the {\em relatively framed mapping class group} as the stabilizer of a chosen relative framing:
\[
\Mod(\Sigma_{g,n})[\phi] := \{f \in \Mod(\Sigma_{g,n}) \mid f \cdot \phi = \phi \mbox{ up to {\em relative} isotopy}\}.
\]
In the case of a surface with boundary, we consider the relative homological representation as follows:
\[
\Psi^{rel}: \Mod(\Sigma_{g,n}) \to \PAut(H_1(\Sigma_{g,n}, \partial \Sigma_{g,n};\Z)).
\]
Note that there is a natural isomorphism $p_*: H_1(\Sigma_{g,n}, \partial \Sigma_{g,n};\Z) \cong H_1(\Sigma_g,Z;\Z)$ induced by contracting each boundary component to a marked point which extends to an isomorphism (also denoted $p_*$) of the corresponding (pure) automorphism groups. The notion of relative isotopy is genuinely more restrictive than standard isotopy, and the relatively framed mapping class group is ``smaller'' than its absolute counterpart (see Section \ref{section:framings} for details). Despite this, we find that there are no further restrictions on the action on relative homology. 

\begin{maintheorem}\label{theorem:main2}
For $g \ge 2$, let $\phi$ be a relative framing of $\Sigma_{g,n}$. Then
\[
\Psi^{rel}(\Mod(\Sigma_{g,n})[\phi]) = \ker (\Theta_\phi \circ p_*)
\]
where $\Theta_\phi$ is the crossed homomorphism of Theorem \ref{theorem:main}.
\end{maintheorem}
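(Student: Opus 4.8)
The plan is to deduce Theorem~\ref{theorem:main2} from Theorem~\ref{theorem:main} by exploiting the contraction $p \colon \Sigma_{g,n} \to \Sigma_g$ that collapses each boundary component to a point of $Z$. This $p$ induces the isomorphism $p_*$ on relative homology (and on pure automorphism groups) named in the statement, and it also induces a \emph{capping homomorphism} $c \colon \Mod(\Sigma_{g,n}) \to \PMod(\Sigma_g, Z)$. My first step is to record two naturality facts: that $c$ is surjective with kernel the free abelian group $\langle T_{\partial_1}, \dots, T_{\partial_n}\rangle$ on the boundary Dehn twists (the iterated capping sequence), and that the two relative homological representations are intertwined by $p_*$, i.e.
\[
p_* \circ \Psi^{rel} = \Psi^{rel} \circ c
\]
as maps $\Mod(\Sigma_{g,n}) \to \PAut(H_1(\Sigma_g, Z; \Z))$. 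The first is the standard capping exact sequence; the second holds because $p$ carries relative cycles of $\Sigma_{g,n}$ to relative cycles of $(\Sigma_g, Z)$ compatibly with the mapping class actions.

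Write $\bar\phi = p_* \phi$ for the framing of $(\Sigma_g, Z)$ obtained by contracting the vector field $\phi$ across the capping disks; the $\Theta_\phi$ of the statement is by definition $\Theta_{\bar\phi}$. The heart of the argument is the subgroup identity
\[
c\big(\Mod(\Sigma_{g,n})[\phi]\big) = \PMod(\Sigma_g, Z)[\bar\phi].
\]
The inclusion $\subseteq$ is immediate from naturality: if $f$ fixes $\phi$ up to relative isotopy, then $c(f)$ fixes $\bar\phi$ up to isotopy. For $\supseteq$, given $h \in \PMod(\Sigma_g, Z)[\bar\phi]$, lift it along the surjection $c$ to some $\tilde h \in \Mod(\Sigma_{g,n})$. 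Since $c(\tilde h) = h$ preserves $\bar\phi$, both $\tilde h \cdot \phi$ and $\phi$ contract to $\bar\phi$. If I can correct $\tilde h$ by a product of boundary twists $B = \prod_i T_{\partial_i}^{k_i}$ so that $(B^{-1}\tilde h)\cdot \phi = \phi$ up to relative isotopy, then $f \coloneq B^{-1}\tilde h$ lies in $\Mod(\Sigma_{g,n})[\phi]$ and satisfies $c(f) = c(B)^{-1} h = h$ (as each $c(T_{\partial_i}) = \id$), giving the reverse inclusion.

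Everything therefore reduces to the following comparison of framings, which I expect to be the main obstacle and which I would draw from the analysis of Section~\ref{section:framings}: \emph{two relative framings of $\Sigma_{g,n}$ that contract to the same framing of $(\Sigma_g, Z)$ differ by the action of a product of boundary Dehn twists.} Concretely, an isotopy realizing $\overline{\tilde h \cdot \phi} = \bar\phi$ on $\Sigma_g$ can fail to be trivial on $\partial \Sigma_{g,n}$ only by rotating each boundary circle an integral number of times, and each such rotation is effected by a power of $T_{\partial_i}$; equivalently, the boundary twists act transitively on the fibre of the contraction map on relative isotopy classes of framings. Granting this, the two subgroups coincide.

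Finally I assemble the pieces. Applying the marked-point $\Psi^{rel}$ to the subgroup identity and using the intertwining relation gives
\[
p_*\big(\Psi^{rel}(\Mod(\Sigma_{g,n})[\phi])\big) = \Psi^{rel}\big(\PMod(\Sigma_g, Z)[\bar\phi]\big) = \ker(\Theta_{\bar\phi}),
\]
where the last equality is Theorem~\ref{theorem:main}. Since $p_*$ is an isomorphism of pure automorphism groups, I pull back through it:
\[
\Psi^{rel}\big(\Mod(\Sigma_{g,n})[\phi]\big) = p_*^{-1}\big(\ker \Theta_{\bar\phi}\big) = \ker(\Theta_{\bar\phi} \circ p_*),
\]
which is exactly Theorem~\ref{theorem:main2}. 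The only genuinely new input beyond Theorem~\ref{theorem:main} is the framing comparison isolating the role of the boundary twists; once it is in hand, the relative homological representation cannot distinguish the relatively framed group from its absolute counterpart, since boundary twists act trivially on $H_1(\Sigma_{g,n}, \partial \Sigma_{g,n}; \Z)$.
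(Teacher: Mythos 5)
Your reduction hinges on the subgroup identity $c\big(\Mod(\Sigma_{g,n})[\phi]\big) = \PMod(\Sigma_g, Z)[\bar\phi]$, and in particular on the claim that boundary Dehn twists act transitively on the fibre of the contraction map on relative isotopy classes of framings. This is false, and the paper says so explicitly: ``The map $p_\phi$ is generally not surjective [Proposition 6.11 of \cite{strata3}].'' To see the problem concretely, fix the restriction of the framing to $\partial\Sigma_{g,n}$ and a distinguished geometric basis $\{x_1,\dots,y_g,a_2,\dots,a_n\}$. Two relative framings contracting to the same absolute framing agree on all closed curves and differ only in the arc values $\phi(a_2),\dots,\phi(a_n)$, so the fibre is a torsor over $\Z^{n-1}$. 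By twist-linearity, $T_{\Delta_j}$ changes $\phi(a_j)$ by $\pm\phi(\Delta_j) = \mp(1+\kappa_j)$, so the boundary twists move you only within the sublattice generated by the signatures; whenever some $|\phi(\Delta_j)|>1$ (which is always the case for framings coming from strata, where $\kappa_j \ge 1$) this sublattice is proper, and there exist $h \in \PMod(\Sigma_g,Z)[\bar\phi]$ with no lift fixing $\phi$. Your heuristic conflates rotating the boundary circle by an ambient isotopy (which changes arc winding numbers by multiples of the signature) with rotating the vector field along the boundary by a full turn (which changes them by $1$); only the former is available in $\ker(c)$.

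The theorem survives because one only needs equality after applying $\Psi^{rel}$, not equality of the framed subgroups themselves, and the correction must be performed by elements of the relative Torelli group $\mathcal I^{rel}(\Sigma_{g,n})$ rather than by boundary twists. This is where the real work of the paper lies: Proposition \ref{proposition:Taction} shows that two relative framings with the same boundary restriction, the same Arf invariant, and the same mod-$2$ $q$-vector on a distinguished geometric basis are related by an element of $\mathcal I^{rel}(\Sigma_{g,n})$, and Proposition \ref{proposition:piece1} then takes an arbitrary lift $f$ of a given $A \in \ker(\Theta_\phi)$ and corrects it to $gf$ with $g \in \mathcal I^{rel}(\Sigma_{g,n})$, so that $gf$ fixes $\phi$ while $\Psi^{rel}(gf) = A$. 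Note also that the paper's logical flow is the reverse of yours: it proves the surjectivity statement for $\Mod(\Sigma_{g,n})[\phi]$ first and deduces Theorem \ref{theorem:main} from it (since $p_\phi$ maps the relatively framed group \emph{into} the absolutely framed one), whereas your argument needs the absolute case as input and then requires exactly the surjectivity of $p_\phi$ that fails. To repair your proof you would need to replace the boundary-twist correction with the Torelli-action analysis of Section \ref{section:action}, at which point you have essentially reconstructed the paper's argument.
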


\begin{remark}
One may use Theorem \ref{theorem:main2} together with the analysis of \cite[\S7]{strata3} to deduce that $\Gamma_{\widehat{\mathcal H}}$ is generated by the action of {\em cylinder shears}, certain deformations of abelian differentials along embedded Euclidean cylinders.
\end{remark}

\begin{remark}
Using Theorem \ref{theorem:main2} together with Theorem 7.13 of \cite{strata3}, one can also identify the homological monodromy groups of either of the non-hyperelliptic components of strata of {\em prong--marked} abelian differentials (see \cite[\S7.3]{strata3}) with $\ker (\Theta_\phi)$. We leave it to the reader to formulate and prove this (completely analogous) statement.
\end{remark}

\para{The crossed homomorphism $\Theta_\phi$}
We now give an explicit description of the crossed homomorphism which characterizes the homological actions of framed mapping class groups. Refer to Section \ref{section:Xhom} for full details. Let $\phi$ be a framing of $(\Sigma_g, Z)$; measuring the winding number of a curve with respect to this framing gives rise to a ``winding number function'' from simple closed curves to $\Z$ (see \S\ref{subsection:wn}). The crossed homomorphism $\Theta_\phi$ of Theorems \ref{theorem:main}, \ref{theorem:corollary}, and \ref{theorem:main2} is then induced by measuring the ``change in winding number mod 2,'' a construction which generalizes the notion of a classical spin structure. 

We recall that a {\em classical spin structure} on a surface $\Sigma_g$ is a quadratic form $q: H_1(\Sigma_g;\Z/2\Z) \to \Z/2$, i.e., a function which satisfies $q(x+y) = q(x) + q(y)+ \pair{x,y}$, where $\pair{x,y}$ denotes the mod $2$ intersection pairing. Such $q$ can be used to determine a crossed homomorphism $\hat q: \Sp(2g,\Z/2\Z) \to H^1(\Sigma_g;\Z/2\Z)$ by the formula 
\[
\hat q (A)(x) = q(Ax) - q(x) \pmod 2,
\] 
which measures the change in $q$--value of each homology class.

Not all framings induce classical spin structures. In particular, we find that in \S\ref{section:more} that $\Theta_\phi$ behaves very differently depending on the combinatorics of $\phi$. For each marked point $p_i$ of $(\Sigma, Z)$, let $\Delta_i$ denote a small counterclockwise loop encircling $p_i$ and set $\kappa_i = -1 - \phi(\Delta_i)$ (here $\phi$ is viewed as a winding number function). Set $\sing = (\kappa_1, \ldots, \kappa_n)$.

If $\phi$ is a framing with all elements of $\sing$ even then the winding number function descends to a $\Z/2\Z$--valued winding number function on $H_1(\Sigma_g, \Z/2\Z)$; the change in winding number then induces a classical spin structure $q$. In this case, we show in Proposition \ref{proposition:summary} there is an equality
\[
\Theta_\phi = p^*(\hat q);
\]
here $p: \PAut(H_1(\Sigma_g, Z;\Z)) \to \Sp(2g,\Z/2\Z)$ is induced by the restriction to absolute homology followed by the reduction of coefficients mod 2.

If some element of $\sing$ is odd, then $\phi$ does not induce a classical spin structure and $\Theta_\phi$ is instead ``concentrated'' on the action on {\em relative} homology.
To describe this action, we note that we can write $\PAut(H_1(\Sigma_g, Z;\Z))$ as the extension of $\Sp(2g, \Z)$ by
\[
 \Hom(\widetilde H_0(Z;\Z), H_1(\Sigma_g;\Z)),
\]
which measures the transvection of the relative homology by absolute classes (see Section \ref{section:structure}). 

Define the element $v_{\sing} \in H_0(Z;\Z)$ by $v_{\sing} := \sum \kappa_i p_i$. This in turn defines a homomorphism
\[
v_{\sing}^*: \Hom(\widetilde H_0(Z;\Z), H_1(\Sigma_g;\Z)) \to H^1(\Sigma_g;\Z/2\Z)
\]
by the formula
\[
v_{\sing}^*(A)(x) = \pair{A(v_{\sing}),x} \pmod 2.
\]

When $\sing$ has odd elements, we show in Lemma \ref{lemma:ptpush} that $\Theta_\phi$ agrees with $v_{\sing}$ on  $\Hom(\widetilde H_0(Z;\Z), H_1(\Sigma_g;\Z))$, which in turn leads to the characterization of $\ker(\Theta_\phi)$ in terms of the short exact sequence
\[1 \to \ker(v_{\sing}^*) \to \ker(\Theta_\phi) \to \Sp(2g,\Z) \to 1.\]

\para{Outline of the proof} By replacing each boundary component of $\Sigma_{g,n}$ with a marked point, there is a map $p: \Mod(\Sigma_{g,n}) \to \PMod(\Sigma_g,Z)$ inducing a map $p_\phi: \Mod(\Sigma_{g,n})[\phi] \to \PMod(\Sigma_g,Z)[\phi]$; in the former we consider the {\em relative} framed mapping class group but in the latter we do not. The map $p_\phi$ is generally not surjective \cite[Proposition 6.11]{strata3}, but to prove both Theorems \ref{theorem:main} and \ref{theorem:main2} it will suffice to 
\begin{enumerate}
\item Construct  the crossed homomorphism $\Theta_\phi$ on $\PAut(H_1(\Sigma_g, Z; \Z))$ and show the containment $\Psi^{rel}(\PMod(\Sigma_g,Z)[\phi]) \le \ker(\Theta_\phi)$,
\item Show that $\Psi^{rel}(p_\phi(\Mod(\Sigma_{g,n}[\phi]))) = \ker(\Theta_\phi)$.
\end{enumerate}
Step (1) is carried out in Section \ref{section:Xhom}, where we define $\Theta_\phi$ as a measure of ``change of mod $2$ winding number'' for simple closed curves. The construction of $\Theta_\phi$ necessitates the discussions in Sections \ref{section:framings} and \ref{section:level2}, where we respectively discuss how a framing gives rise to a ``winding number function'' on simple closed curves, and some basic properties of the ``level $2$ mapping class group'' used to study the set of simple closed curves in a fixed mod $2$ homology class. 

Starting with a purely geometric definition of $\Theta_\phi$ as a function from $\PMod(\Sigma_g,Z)$ to a certain set, we show in Lemmas \ref{proposition:oddtrace} and \ref{proposition:induced} that $\Theta_\phi$ actually has the structure of a crossed homomorphism and that it is induced from a crossed homomorphism on $\PAut(H_1(\Sigma_g, Z; \Z))$. From the geometric origins of $\Theta_\phi$, it is then clear that $\PMod(\Sigma_g, Z)[\phi]$ is contained in the kernel. At the heart of these arguments are the ``twist--linearity'' and ``homological coherence'' properties of winding number functions discussed in Lemma \ref{lemma:HJ}. 

Step (2) is carried out in Section \ref{section:action}. The core result there is Proposition \ref{proposition:Taction}, which describes the action of the ``relative Torelli group'' $\mathcal I^{rel}(\Sigma_{g,n})$ on the set of relative framings. The proof of Theorems \ref{theorem:main} and \ref{theorem:main2} conclude with Proposition \ref{proposition:piece1}, which establishes the surjectivity of $\Psi^{rel}(\Mod(\Sigma_{g,n})[\phi])$ onto $\ker(\Theta_\phi)$. The strategy here is to first find any mapping class $f$ realizing an element $A \in \ker(\Theta_\phi)$, and use Proposition \ref{proposition:Taction} to adjust $f$ so as to stabilize $\phi$ without altering $\Psi^{rel}(f)$. 

Finally in Section \ref{section:more}, we give a more explicit description of the group $\ker(\Theta_\phi)$, emphasizing the difference in its structure caused by arithmetic properties of the framing $\phi$ (or equivalently, the arithmetic of the partition $\sing$ of $2g-2$). 

\subsection{Acknowledgments}

This project was begun when the authors were visiting MSRI for the Fall 2019 program ``Holomorphic Differentials in Mathematics and Physics.'' Both authors would like to thank the venue for its hospitality, excellent working environment, and generous travel support. The first author gratefully acknowledges support for this visit from NSF grants DMS-161087 as well as DMS-1107452, -1107263, and -1107367 ``RNMS: Geometric Structures and Representation Varieties'' (the GEAR Network).

We would also like to acknowledge Alex Wright for some very useful feedback on a preliminary draft, as well as an anonymous referee for a careful reading and helpful suggestions.

\section{Framings and framed mapping class groups}\label{section:framings}
 We briefly recall here the notion of a relative framing of a surface and the associated framed mapping class group. For a more thorough discussion, see \cite[Section 2]{strata3}. Throughout this section, we will formulate our results in the setting of surfaces with boundary. The theory of framings on a marked surface $(\Sigma_g, Z)$ exactly parallels the theory of ``absolute'' (i.e. non-relative) framings on $\Sigma_{g,n}$; we trust the reader can make the cosmetic adjustments necessary to formulate results for framings of $(\Sigma_g,Z)$. 

\subsection{(Relative) framings on surfaces with boundary} Let $\Sigma_{g,n}$ be a surface with $n \ge 1$ boundary components; then a {\em framing} of $\Sigma_{g,n}$ is a trivialization $\phi$ of the tangent bundle of $\Sigma_{g,n}$. After fixing a Riemannian metric $\mu$ on $\Sigma_g$ once and for all, a framing $\phi$ corresponds to a nowhere--vanishing vector field $\xi_\phi$.  We say that framings $\phi$ and $\psi$ are {\em isotopic} if $\xi_\phi$ and $\xi_\psi$ are homotopic through nowhere-vanishing vector fields. If $\phi$ and $\psi$ both restrict to the same framing $\delta$ of $\partial \Sigma_{g,n}$, then $\phi$ and $\psi$ are {\em relatively isotopic} if $\xi_\phi$ and $\xi_\psi$ are isotopic through vector fields restricting to $\delta$ on $\partial \Sigma_{g,n}$. 

\subsection{Winding number functions}\label{subsection:wn} The data of a (relative) isotopy class of framing is equivalent to a structure known as a {\em (relative) winding number function}, which is easier to work with in practice. We first observe that if $\gamma: S^1 \to \Sigma_{g,n}$ is a $C^1$ immersion, then the framing $\phi$ assigns a winding number $\phi(\gamma) \in \Z$ measuring the winding number of the forward--pointing tangent vector $\gamma'(t)$ with respect to $\xi_\phi$. It is not hard to see that $\phi(\gamma)$ is invariant under ambient isotopy. Let $\mathcal S(\Sigma_{g,n})$ denote the set of isotopy classes of oriented simple closed curves on $\Sigma_{g,n}$; then the framing $\phi$ determines a {\em winding number function}
\[
\phi: \mathcal S(\Sigma_{g,n}) \to \Z, \qquad c \mapsto \phi(c).
\]
Suppose now that each component $\Delta_i \subset \partial \Sigma_{g,n}$ is equipped with a point $p_i \in \Delta_i$ such that $\xi_\phi$ is orthogonally outward-pointing (such points $p_i$ always exist, possibly after adjusting $\xi_\phi$ by an isotopy supported near $\partial \Sigma_{g,n}$). We call such a point $p_i$ a {\em legal basepoint}. Choose exactly one legal basepoint on each boundary component. We say that a properly embedded arc $a: [0,1] \to \Sigma_{g,n}$ is {\em legal} if $a(0)$ and $a(1)$ are distinct legal basepoints, $a'(0)$ is orthogonally inward--pointing, and $a'(1)$ is orthogonally outward--pointing. The winding number of a legal arc is necessarily half--integral and is well--defined up to isotopy through legal arcs. Therefore, a framing $\phi$ gives rise to a {\em relative winding number function}
\[
\phi: \mathcal S^+(\Sigma_{g,n}) \to \tfrac{1}{2}\Z;\qquad c \mapsto \phi(c)
\]
where $\mathcal S^+(\Sigma_{g,n})$ denotes the set obtained from $\mathcal S(\Sigma_{g,n})$ by including all isotopy classes of legal arcs.

We say that the {\em signature} of a boundary component $\Delta \subset \partial \Sigma_{g,n}$ is the value $\phi(\Delta)$. On framed surfaces with marked points, the signature of a marked point is the winding number of a small counterclockwise loop encircling the marked point.

\subsection{(Relative) isotopy classes of framings}\label{section:DGB} The basic theory of relative isotopy classes of framings was established by Randal--Williams \cite{RW}. To state his results, we define a {\em distinguished geometric basis} on $\Sigma_{g,n}$ to be a collection
\[
\mathcal B = \{x_1,y_1, \dots, x_g, y_g\} \cup \{a_2, \dots, a_n\}
\]
of oriented simple closed curves $x_1, \dots, y_g$ and legal arcs $a_2, \dots, a_n$, subject to the following conditions. Below, the function $i(\cdot, \cdot)$ denotes the geometric intersection number, and $\pair{\cdot, \cdot}$ denotes the algebraic intersection number.
\begin{enumerate}
\item $i(x_i,y_i) = \pair{x_i,y_i} = 1$ and each $x_i, y_i$ is disjoint from all other $x_j,y_j, a_k$. 
\item Each arc $a_i$ is a legal arc running from a fixed legal basepoint $p_1 \in \Delta_1$ to the legal basepoint $p_i \in \Delta_i$, and the collection of $a_i$ are pairwise disjoint except at the common endpoint $p_1$.
\end{enumerate}

The following is a summary of the basic theory of relative winding number functions and relative isotopy classes of framings. For further discussion, see \cite[Section 2]{strata3} or \cite{RW}.

\begin{proposition}\label{proposition:WNprops}
Fix $g \ge 2$ and $n \ge 1$, and let $\delta$ be a framing of $\partial \Sigma_{g,n}$. Let $\phi, \psi$ be framings of $\Sigma_{g,n}$ restricting to $\delta$ on $\partial \Sigma_{g,n}$.
\begin{enumerate}
\item $\phi$ and $\psi$ are (relatively) isotopic if and only if the associated (relative) winding number functions are equal.
\item The (relative) winding number functions $\phi$ and $\psi$ are equal if and only if there are equalities $\phi(b) = \psi(b)$ for all $b \in \mathcal B$, where $\mathcal B$ is any distinguished geometric basis.
\end{enumerate}
\end{proposition}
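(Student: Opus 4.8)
The plan is to reduce everything to the standard classification of nowhere-vanishing vector fields on a surface up to homotopy by a single cohomological ``difference class,'' which is the foundational input from Randal--Williams \cite{RW}. The forward direction of (1) is immediate: if $\phi$ and $\psi$ are (relatively) isotopic, then $\xi_\phi$ and $\xi_\psi$ are homotopic through nowhere-vanishing vector fields (restricting to $\delta$ on $\partial \Sigma_{g,n}$ in the relative case), and the winding number of any fixed immersed curve or arc is invariant under such a homotopy; hence the (relative) winding number functions agree. The content is in the converse, which I would extract from a careful study of the difference between $\phi$ and $\psi$.

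First I would construct the difference class. After rescaling $\xi_\phi$ and $\xi_\psi$ to be $\mu$-unit vector fields, at each point they differ by a rotation, producing a well-defined map $u\colon \Sigma_{g,n} \to S^1$ recording the angle from $\xi_\phi$ to $\xi_\psi$. Since both framings restrict to $\delta$, the map $u$ is constant (equal to the identity rotation) along $\partial \Sigma_{g,n}$, so it determines a class $[u] \in H^1(\Sigma_{g,n}, \partial \Sigma_{g,n}; \Z)$; forgetting the boundary condition gives a class in $H^1(\Sigma_{g,n}; \Z)$. Because $S^1$ is a $K(\Z, 1)$, homotopy classes of such maps are classified exactly by this class with no higher obstructions, and a homotopy of $\xi_\psi$ to $\xi_\phi$ (rel $\partial$, in the relative case) corresponds precisely to a null-homotopy of $u$. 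Thus $\phi$ and $\psi$ are relatively isotopic if and only if $[u] = 0$ in $H^1(\Sigma_{g,n}, \partial \Sigma_{g,n}; \Z)$, and isotopic if and only if the corresponding absolute class vanishes.

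The crux is the dictionary between $[u]$ and the winding number functions. For an oriented simple closed curve $c$, the winding number $\phi(c)$ measures the total rotation of $c'$ relative to $\xi_\phi$; subtracting, the tangent contribution cancels and $\phi(c) - \psi(c)$ equals the degree of $u|_c$, i.e. the Kronecker pairing $\pair{[u], [c]}$. For a legal arc $a$ the individual winding numbers are only half-integral, but since $\xi_\phi$ and $\xi_\psi$ agree at the legal endpoints their half-integer contributions coincide, so $\phi(a) - \psi(a)$ is an integer equal to the degree of the path $u|_a$ (a path between points where $u = 1$), and hence to the relative pairing $\pair{[u], [a]}$. Consequently $\phi - \psi$, as a function on $\mathcal S^+(\Sigma_{g,n})$, is exactly evaluation of $[u]$ against (relative) homology; in particular the (relative) winding number functions are equal if and only if $[u] = 0$, which by the previous paragraph is equivalent to (relative) isotopy. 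This proves (1).

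Part (2) is then linear algebra over $\Z$. By the universal coefficient theorem, $[u] \in H^1(\Sigma_{g,n}, \partial \Sigma_{g,n}; \Z)$ is determined by its values on a basis of $H_1(\Sigma_{g,n}, \partial \Sigma_{g,n}; \Z) \cong \Z^{2g + n - 1}$, and a distinguished geometric basis supplies exactly $2g + (n-1)$ classes --- the $x_i, y_i$ together with the arcs $a_i$ --- which the defining disjointness and intersection conditions guarantee form such a basis. Hence $[u] = 0$ if and only if $\pair{[u], b} = 0$ for every $b \in \mathcal B$, i.e. if and only if $\phi(b) = \psi(b)$ for all $b \in \mathcal B$, as claimed. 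The main obstacle is not the obstruction theory --- which is clean since $S^1$ is aspherical --- but the concrete dictionary of the third paragraph: verifying that the winding number difference is additive and computes the Kronecker/relative pairing, and in particular the half-integer bookkeeping for arcs that makes $\phi(a) - \psi(a)$ an honest relative degree. Once this identification is pinned down, both statements are formal, and I would cite \cite{RW} for the underlying classification.
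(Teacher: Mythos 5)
Your argument is correct. The paper itself gives no proof of this proposition --- it is stated as a summary of results quoted from Randal--Williams \cite{RW} and \cite[Section 2]{strata3} --- and your difference-class argument is precisely the standard one used in that source: the map $u\colon \Sigma_{g,n}\to S^1$ recording the angle between the two unit vector fields, the identification of (relative) homotopy classes of framings with $H^1(\Sigma_{g,n},\partial\Sigma_{g,n};\Z)$ via $K(\Z,1)$ obstruction theory, and the dictionary $\phi(c)-\psi(c)=\pm\langle [u],[c]\rangle$ reducing part (2) to the fact that a distinguished geometric basis gives a basis of $H_1(\Sigma_{g,n},\partial\Sigma_{g,n};\Z)\cong\Z^{2g+n-1}$ (the Ext term in universal coefficients vanishing since every component meets the boundary). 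The only point worth making explicit is the absolute case of (2): there the relevant group is $H^1(\Sigma_{g,n};\Z)$, whose dual $H_1(\Sigma_{g,n};\Z)$ is \emph{not} spanned by the $x_i,y_i$ alone but also needs the boundary classes $[\Delta_i]$; one should note that $\langle[u],[\Delta_i]\rangle=0$ automatically because $\phi$ and $\psi$ both restrict to $\delta$, so checking on the absolute elements of $\mathcal B$ still suffices. With that remark added, the proof is complete.
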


\subsection{Mapping class group orbits} Randal--Williams classifies the set of orbits of relative framings under the action of $\Mod(\Sigma_{g,n})$. He finds that for $g \ge 2$ there are always exactly two orbits, classified by an element of $\Z/2\Z$ known as the {\em (generalized) Arf invariant}. The orbit structure in the case of absolute framings is somewhat different and was treated by Kawazumi \cite{kawazumi}, but we do not need to discuss this here. 

\begin{definition}[Arf invariant; c.f. Section 2.2 of \cite{strata3} and Section 2.4 of \cite{RW}]
Let $\Sigma_{g,n}$ be a surface with $g \ge 2$ and $n \ge 1$, and let $\phi$ be a relative framing of $\Sigma_{g,n}$; we denote the associated relative winding number function by the same symbol. Let $\mathcal B = \{x_1, \dots, y_g, a_2, \dots, a_n\}$ be a distinguished geometric basis. Define the element
\begin{equation}\label{equation:arf}
\Arf(\phi, \mathcal B) = \sum_{i = 1}^g (\phi(x_i) + 1)(\phi(y_i) + 1) + \sum_{i = 2}^n (\phi(a_i) + \tfrac{1}{2}) (\phi(\Delta_i) + 1) \pmod 2.
\end{equation}
\end{definition}
The Arf invariant classifies $\Mod(\Sigma_{g,n})$-orbits of relative framings in the following sense. 

\begin{proposition}[c.f. Proposition 2.8, Theorem 2.9 of \cite{RW}]\label{proposition:ModArfaction}
If $\mathcal B, \mathcal B'$ are two distinguished geometric bases for $\Sigma_{g,n}$, then $\Arf(\phi, \mathcal B) = \Arf(\phi, \mathcal B')$; consequently we write simply $\Arf(\phi)$. If $\phi$ and $\psi$ are framings of $\Sigma_{g,n}$ restricting to the same framing of $\partial \Sigma_{g,n}$, then there exists $f \in \Mod(\Sigma_{g,n})$ such that $f \cdot \phi = \psi$ if and only if $\Arf(\phi) = \Arf(\psi)$. 
\end{proposition}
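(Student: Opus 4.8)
The plan is to follow Randal--Williams \cite{RW}, organizing everything around the twist--linearity and homological coherence properties of winding number functions recorded in Lemma \ref{lemma:HJ}. For the first assertion (independence of the basis $\mathcal B$), I would argue that any two distinguished geometric bases are connected by a finite sequence of explicit elementary moves, and then verify that the right--hand side of (\ref{equation:arf}) is unchanged by each such move. The moves fall into two families: those altering only the symplectic generators $\{x_i, y_i\}$, and those involving the arcs $\{a_i\}$. For the first family the computation reduces to the classical fact that $\sum_i (\phi(x_i)+1)(\phi(y_i)+1) \bmod 2$ is the Arf invariant of the $\Z/2\Z$--quadratic form $c \mapsto \phi(c) + 1$; here twist--linearity (Lemma \ref{lemma:HJ}) is exactly what guarantees that this function is a quadratic refinement of the mod $2$ intersection pairing, so that its Arf invariant is symplectically invariant. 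For the second family I would use homological coherence to control how the $\phi(a_i)$ and the boundary signatures $\phi(\Delta_i)$ transform, checking that the arc contributions $(\phi(a_i) + \tfrac12)(\phi(\Delta_i)+1)$ compensate for any induced change in the symplectic sum.

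For the second assertion, one direction is formal: if $f \cdot \phi = \psi$, then since the winding number function of $f \cdot \phi$ is $\phi \circ f_*^{-1}$ and $f^{-1}\mathcal B$ is again a distinguished geometric basis, the first part gives $\Arf(\psi) = \Arf(\phi, f^{-1}\mathcal B) = \Arf(\phi)$. The substantive direction is the converse. Here I would invoke Proposition \ref{proposition:WNprops}(2): a relative framing is determined up to relative isotopy by its winding numbers on a distinguished geometric basis, so it suffices to produce $f \in \Mod(\Sigma_{g,n})$ carrying $\mathcal B$ to a basis on which $\phi$ realizes exactly the values that $\psi$ takes on $\mathcal B$. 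I would establish this via a normal form: using Dehn twists, whose effect on winding numbers is computed by twist--linearity, together with mapping classes that permute and recombine the generators, one reduces $\phi$ to a standard winding number function in which every basis value is normalized and the only surviving invariant is the parity recorded by $\Arf$.

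The main obstacle is precisely this last normalization. Its essential content is an analogue, in the presence of arcs and a prescribed boundary framing, of the classification of $\Z/2\Z$--quadratic forms up to the symplectic group, where one arranges a symplectic basis so that all but at most one pair $(x_i, y_i)$ satisfy $\phi(x_i) + 1 = \phi(y_i) + 1 = 0 \bmod 2$. Carrying this out requires care in two places. First, one must check that each required elementary move is realized by an honest mapping class of $\Sigma_{g,n}$ fixing the boundary framing $\delta$, so that the comparison of $\phi$ and $\psi$ remains a single relative--isotopy problem rather than drifting between boundary conditions. Second, one must handle the interaction between the arc winding numbers $\phi(a_i)$ and the fixed signatures $\phi(\Delta_i)$: by homological coherence these are constrained rather than independent, and they are responsible for the asymmetric terms in (\ref{equation:arf}), so the normalization of the arc data must be performed compatibly with the normalization of the symplectic data.
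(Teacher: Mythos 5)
The first thing to note is that the paper itself offers no proof of this statement: it is imported wholesale from Randal--Williams (Proposition 2.8 and Theorem 2.9 of \cite{RW}), so there is no internal argument to compare yours against. Judged on its own terms, your outline does follow the strategy one would expect (and roughly what \cite{RW} does): prove basis-independence by checking invariance under elementary moves, get one direction of the classification formally from naturality, and get the converse by normalizing a distinguished geometric basis until only the Arf parity survives. The formal direction ($f\cdot\phi = \psi$ implies equal Arf invariants via $\Arf(\psi) = \Arf(\phi, f^{-1}\mathcal B)$) is correct and complete as stated.

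The substantive parts, however, remain sketches, and the one concrete mathematical claim you do make is not right as stated. You assert that twist-linearity makes $c \mapsto \phi(c)+1$ a quadratic refinement of the mod $2$ intersection pairing, so that $\sum_i(\phi(x_i)+1)(\phi(y_i)+1)$ is a classical Arf invariant and hence symplectically invariant. But for $\phi(c) \bmod 2$ to define a function on mod $2$ homology at all, one needs every boundary signature to be odd: by homological coherence, two homologous curves cobounding a subsurface containing boundary components of even signature have different winding-number parities. (This is exactly the dichotomy the paper itself exploits in Section \ref{section:more} and Remark \ref{remark:stronger}.) What saves the symplectic sum is that the $x_i,y_i$ live in the complement of the arcs, a genus $g$ subsurface with a single boundary component of odd signature $1-2g$, where the restriction of $\phi$ genuinely is a spin structure --- but that subsurface depends on the arc system, so the invariance of the full expression \eqref{equation:arf} under a change of distinguished geometric basis that moves the arcs is precisely where the content lies, and it is not a consequence of the closed-surface Arf story. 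Your proposal acknowledges this interaction in the final paragraph but does not resolve it, nor does it enumerate the elementary moves or carry out the normal-form argument for the converse. As a roadmap to \cite{RW} it is sound; as a proof it has a genuine gap at the quadratic-form step and defers the remaining work.
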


\subsection{Properties of relative winding number functions} Following Proposition \ref{proposition:WNprops}, we know that isotopy classes of framings can be studied by means of their relative winding number functions. The results below establish some essential properties of relative winding number functions which were identified by Humphries--Johnson in \cite{HJ}.

\begin{lemma}\label{lemma:HJ}
Let $\phi$ be a relative winding number function. Then $\phi$ satisfies the following properties:
\begin{enumerate}
\item \label{item:reversibility} (Reversibility) Let $\bar c$ denote the curve/arc $c$ with the opposite orientation. Then $\phi(\bar c) = - \phi(c)$.
\item \label{TL} (Twist-linearity) Let $c \in \mathcal S(\Sigma_{g,n})$ and $a \in \mathcal S^+(\Sigma_{g,n})$ be given. Then 
\[
\phi(T_c^k(a)) = \phi(a) + k\pair{a,c}\phi(c).
\]
\item \label{HC} (Homological coherence) Let $S \subset \Sigma_{g,n}$ be a subsurface with boundary components $c_1, \dots, c_k$. Orient each $c_i$ so that $S$ lies to the left. Then
\[
\sum_{i = 1}^n \phi(c_i) = \chi(S).
\]
\end{enumerate}
\end{lemma}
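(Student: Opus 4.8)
The plan is to work directly with the differential-geometric definition of the winding number: after fixing the metric $\mu$ and the nowhere-vanishing vector field $\xi_\phi$, the value $\phi(c)$ is the degree of the ``angle map'' $S^1 \to \R/2\pi\Z$ recording the direction of the forward tangent $c'(t)$ relative to $\xi_\phi(c(t))$; equivalently it is $\tfrac{1}{2\pi}$ times the total turning of $c'$ measured in the trivialization of $T\Sigma_{g,n}$ furnished by $\xi_\phi$. Both Reversibility and Homological coherence fall out quickly from this description, so the real content is Twist-linearity; I would prove the three in that order.

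For Reversibility, reversing the orientation of $c$ replaces the parametrization $t \mapsto c(t)$ by $t \mapsto c(-t)$ and negates the tangent vector. The latter adds the constant $\pi$ to the angle map and so does not affect its degree, while the former reverses the direction in which $S^1$ is traversed and hence negates the degree; together these give $\phi(\bar c) = -\phi(c)$ (for a legal arc the same computation applies, the reversal exchanging the inward/outward boundary conditions at the two endpoints, consistently with the half-integral normalization). For Homological coherence, I would use $\xi_\phi$ to trivialize $TS \cong S \times \R^2$ and invoke Poincar\'e--Hopf in this trivialization: for any vector field $V$ on $S$ with isolated zeros and nonvanishing along $\partial S$, the degree of $V/\abs{V}\colon \partial S \to S^1$ computed in the $\xi_\phi$-framing equals the sum of the indices of the interior zeros of $V$. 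Taking $V$ to restrict along $\partial S$ to the positively oriented unit tangent field (with $S$ on the left), the left-hand degree is exactly $\sum_i \phi(c_i)$ by definition of the winding number, while the index sum of a boundary-tangent field equals $\chi(S)$ by the classical boundary-tangent form of the index theorem (e.g.\ a rotational field on a disk has a single index-$+1$ zero, matching $\chi(\disk) = 1$). Hence $\sum_i \phi(c_i) = \chi(S)$.

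Twist-linearity is the crux, and here I would first reduce to the case $k = 1$. Since $T_c$ is a symplectomorphism fixing $[c]$, one has $\pair{T_c^{\,j}(a), c} = \pair{a,c}$ for all $j$, so applying the $k=1$ identity to $T_c^{\,k-1}(a)$ and inducting yields the general power. For $k = 1$, isotope $a$ into minimal position with $c$ (legitimate, as both $\phi$ and $\pair{a,c}$ are isotopy invariants) and work in a flat annular neighborhood $A$ of $c$, recalling that $\phi(c)$ is by definition the winding of the core direction relative to $\xi_\phi$. The twist $T_c$ is supported in $A$ and performed smoothly, so $T_c(a)$ is again a smooth immersed curve agreeing with $a$ outside $A$; its total turning therefore differs from that of $a$ only by contributions localized at the finitely many arcs crossing $A$. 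At each such crossing the straight transverse arc is replaced by one that spirals once in the core direction, and a direct computation in the flat model shows that this changes the relative turning by exactly $\pm\phi(c)$, the sign being that of the crossing. Summing over crossings gives $\phi(T_c(a)) - \phi(a) = \pair{a,c}\,\phi(c)$.

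The step I expect to be the main obstacle is precisely this last local computation, and in particular arranging that the contributions add up to the \emph{algebraic} intersection number $\pair{a,c}$ rather than the geometric one $i(a,c)$. The subtlety is that the twist inserts a spiraling arc at \emph{every} geometric crossing, so I must check that a pair of oppositely signed crossings contributes $+\phi(c)$ and $-\phi(c)$ and hence cancels; this is where Reversibility re-enters, since reversing the transverse sense in which $a$ meets $c$ reverses the sense of the inserted spiral relative to the direction of travel along $a$. Making the ``$\pm\phi(c)$ per crossing'' claim rigorous requires a careful accounting of the turning of the tangent relative to $\xi_\phi$ across $A$ together with the matching conditions at $\partial A$; I would isolate this as a single local lemma in the flat model and then sum.
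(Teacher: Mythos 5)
The paper does not actually prove this lemma: it is stated as a summary of results of Humphries--Johnson \cite{HJ}, so there is no in-paper argument to compare against. Your outline is the standard differential-geometric proof (essentially the one in the cited source), and two of the three parts are complete as written: the reversibility argument (constant shift by $\pi$ plus reversal of the parametrization) and the homological-coherence argument via Poincar\'e--Hopf in the $\xi_\phi$-trivialization, with the boundary-tangent field, are both correct, including the orientation convention ($S$ to the left matches degree $+1$ on the disk).

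The one genuine shortfall is exactly where you flag it: for twist-linearity you reduce correctly to $k=1$ and to a local computation in a flat annulus, but you then \emph{assert} that each transverse strand contributes exactly $\pm\phi(c)$ to the total turning, with sign equal to the sign of the crossing, rather than proving it. This is where all the content of the lemma lives, and it is also where errors typically enter: one must check (i) that splicing a full loop around the annulus into a transverse strand changes the turning by precisely the turning of the core curve with \emph{no} additive correction coming from the two junction angles (the na\"ive ``concatenate $a$ with a copy of $c$'' picture can produce a spurious $\pm 1$ from the corners of an oriented smoothing), and (ii) that the direction of traversal of the inserted spiral, hence the sign $\pm\phi(c)$, is governed by the sign of the crossing, so that oppositely signed geometric intersections cancel and only $\pair{a,c}$ survives. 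Your explanation of (ii) via reversibility is the right idea, but until the ``local lemma'' in the flat model is actually carried out --- writing the twisted strand explicitly as $(\theta(t)+2\pi t, t)$ and computing the angle of its tangent against $\xi_\phi$ across the annulus, with matching conditions at $\partial A$ --- the proof is an outline rather than a proof. Completing that single computation would make the argument whole; everything else is in order.
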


\section{The level 2 mapping class group}\label{section:level2}
We collect here some basic facts about the ``level $2$ mapping class group'' that will be used in the following section. Let $\Psi_2$ be the homomorphism
\[
\Psi_2: \PMod(\Sigma_g, Z) \to \Sp(2g, \Z/2\Z)
\]
obtained by reducing the symplectic representation $\Psi$ mod $2$. We define
\[
\PMod(\Sigma_g, Z)[2] = \ker(\Psi_2).
\]
We emphasize that, as we have defined it, $\PMod(\Sigma_g, Z)[2]$ is the full preimage of the ``classical'' level $2$ subgroup $\PMod(\Sigma_g)[2] \le \PMod(\Sigma_g)$. In particular, no constraints are placed on the action of $\PMod(\Sigma_g,Z)[2]$ on {\em relative} homology classes in $H_1(\Sigma_{g}, Z; \Z/2\Z)$. We adopt this definition (as opposed to anything more restrictive) because we see below in Proposition \ref{proposition:level2trans} that $\PMod(\Sigma_g,Z)[2]$ acts transitively on simple closed curves in a fixed mod-$2$ homology class.

\begin{proposition}\label{proposition:level2trans}
Let $c, c' \subset \Sigma_g \setminus Z$ be simple closed curves, and suppose that $[c] = [c']$ as elements of $H_1(\Sigma_g; \Z/2)$. Then there exists $g \in \PMod(\Sigma_g,Z)[2]$ such that $g(c) = c'$.
\end{proposition}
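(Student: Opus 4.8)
The plan is to reduce the statement to the well-known transitivity of the mapping class group on simple closed curves of a fixed topological type, and then to correct for the failure to land inside the level $2$ subgroup. First I would recall the classification of simple closed curves up to the action of $\PMod(\Sigma_g, Z)$. A nonseparating simple closed curve is determined up to this action by its isotopy class type, and the change-of-coordinates principle (see Farb--Margalit) provides some $h \in \PMod(\Sigma_g, Z)$ with $h(c) = c'$ whenever $c$ and $c'$ are both nonseparating and homologous. The genuinely relevant case is $[c] = [c'] \neq 0$ in $H_1(\Sigma_g; \Z/2)$, so I would focus on nonseparating $c, c'$; I should also address the case $[c] = [c'] = 0$, where both curves are either separating or (for the mod $2$ statement) bound in $\Z/2$-homology, handling this by the same change-of-coordinates principle adapted to the relevant topological type.

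The main point is then to promote such an $h$ to an element of $\PMod(\Sigma_g, Z)[2]$. The obstruction is precisely $\Psi_2(h) \in \Sp(2g, \Z/2\Z)$, which need not be trivial. The key observation is that $h$ maps $c$ to $c'$, so $\Psi_2(h)$ fixes the common class $[c] = [c']$ in $H_1(\Sigma_g; \Z/2)$. I would therefore seek to multiply $h$ on the left by some $k \in \PMod(\Sigma_g, Z)$ with $k(c') = c'$ (setwise, preserving orientation up to the ambient action) and with $\Psi_2(k) = \Psi_2(h)^{-1}$, so that $g := kh$ satisfies $g(c) = c'$ and $\Psi_2(g) = \id$, i.e.\ $g \in \PMod(\Sigma_g, Z)[2]$. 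Concretely, it suffices to exhibit, for every element $M$ of the stabilizer of $[c]$ in $\Sp(2g, \Z/2\Z)$, a mapping class fixing $c$ whose mod $2$ symplectic image is $M$.

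The hard part will be producing these correction elements supported away from $c$ while realizing the entire stabilizer $\Stab_{\Sp(2g,\Z/2\Z)}([c])$. I would cut $\Sigma_g$ along $c$ to obtain a subsurface $\Sigma'$ (of genus $g-1$ with the extra boundary, retaining the marked points $Z$), and use that mapping classes supported on $\Sigma'$ fix $c$; their mod $2$ symplectic images generate a large subgroup of $\Stab([c])$. The remaining generators of the stabilizer—those mixing $[c]$ with the rest of homology, such as transvections along curves meeting $c$ once—can be realized by products of Dehn twists that can be arranged to fix $c$ after composing with the twist $T_c$ itself (which has trivial action on $[c] = [c']$ and can be absorbed). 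The technical care needed is to verify that the subgroup of $\Sp(2g, \Z/2\Z)$ generated by the mod $2$ reductions of all such curve-fixing classes is the full stabilizer of $[c]$; this is a generation statement for $\Sp(2g, \Z/2\Z)$ by transvections, where the relatively free behavior on relative homology (no constraints are imposed there, by our definition of the level $2$ subgroup) gives us enough room to make the needed adjustments without disturbing the condition $g(c) = c'$.
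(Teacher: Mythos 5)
Your strategy is genuinely different from the paper's, and as written it has a gap at its central step. The paper's proof is a reduction: push $c,c'$ down to the closed surface, quote the folklore fact that $\PMod(\Sigma_g)[2]$ acts transitively on simple closed curves in a fixed mod-$2$ class, lift the resulting mapping class arbitrarily to $\PMod(\Sigma_g,Z)$ (any lift lies in $\PMod(\Sigma_g,Z)[2]$ precisely because that group is \emph{defined} as the full preimage, with no condition on relative homology), and then correct by a point-push in $\ker(p)\le\PMod(\Sigma_g,Z)[2]$ to fix the isotopy rel $Z$. You instead try to prove the transitivity from scratch, and the load-bearing claim --- that every element of $\Stab_{\Sp(2g,\Z/2\Z)}([c'])$ is realized by a mapping class fixing $c'$ --- is exactly the hard content, left unverified. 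Worse, your sketch of how to realize that stabilizer is partly wrong: a transvection $T_w$ along a class $w$ with $\pair{[c'],w}=1$ satisfies $T_w([c'])=[c']+w\neq[c']$, so ``transvections along curves meeting $c'$ once'' do not even lie in the stabilizer of $[c']$ and cannot serve as the missing generators. What you would actually need is that $\Stab_{\Sp(2g,\Z/2\Z)}([c'])$ is generated by transvections along classes in $[c']^{\perp}$ (all of which are realizable by twists about curves disjoint from $c'$); that is a nontrivial algebraic generation statement requiring its own proof, and the most natural topological route to it (find any $f$ realizing $M$, then move $f(c')$ back to $c'$ by a level-$2$ element) is circular with the proposition itself.

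There is a second, smaller issue at the start: the change-of-coordinates principle produces $h$ with $h(c)=c'$ only when $c$ and $c'$ have the same topological type. For nonseparating curves with $[c]=[c']\neq 0$ this is automatic, but in the case $[c]=[c']=0$ both curves are separating and may split off subsurfaces of different genera (or partition $Z$ differently), in which case \emph{no} mapping class carries $c$ to $c'$ and your reduction cannot even begin. (This case is also delicate in the statement as quoted, and the paper sidesteps it by citing the closed-surface result rather than reproving it.) To repair your argument you would need to either restrict to nonseparating curves and handle the separating case separately, or abandon the direct approach in favor of the paper's reduction, which converts all of this work into a single citation plus the observation that the level-$2$ group here imposes no constraint on relative homology.
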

\begin{proof}
Let $\bar c, \bar{c'}$ denote the images of $c, c'$ in $\Sigma_g$. It is a folklore result that for $\Sigma_g$ a closed surface, $\PMod(\Sigma_g)[2]$ acts transitively on the set of simple closed curves $c$ in a fixed mod $2$ homology class (compare \cite[Proposition 6.14]{FarbMarg}), and thus there exists $f \in \PMod(\Sigma_g)[2]$ such that $f(\bar c) = \bar{c'}$. Let $\tilde f$ be an arbitrary lift of $f$ to $\PMod(\Sigma_g, Z)$. By our definition of $\PMod(\Sigma_g,Z)[2]$, we have $\tilde f \in \PMod(\Sigma_g,Z)[2]$, and by construction, the curve $\tilde f(c)$ is isotopic to $c'$ after forgetting the set $Z$ of marked points. Let $p: \PMod(\Sigma_g,Z) \to \PMod(\Sigma_g)$ be the forgetful map; then there is an element $h \in \ker(p)$ such that $h(\tilde f(c))$ and $c'$ are isotopic rel $Z$. Therefore $g = h\tilde f \in \PMod(\Sigma_g,Z)[2]$ is the required element. 
\end{proof}

\begin{proposition}\label{proposition:level2gens}
For $g \ge 1$, the level $2$ mapping class group $\PMod(\Sigma_g,Z)[2]$ is generated by two classes of elements: ``squared twists'' $T_a^2$, and ``point--push maps'' $T_a T_b^{-1}$, where $a \cup b$ bounds an annulus containing a single element of $Z$.
\end{proposition}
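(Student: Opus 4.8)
The plan is to study $\PMod(\Sigma_g, Z)[2]$ through the forgetful map $p \colon \PMod(\Sigma_g, Z) \to \PMod(\Sigma_g)$. Since the mod $2$ symplectic representation $\Psi_2$ only sees absolute homology, it factors through $p$, so by definition $\PMod(\Sigma_g, Z)[2] = p^{-1}(\PMod(\Sigma_g)[2])$. This gives a short exact sequence
\[
1 \to \ker(p) \to \PMod(\Sigma_g, Z)[2] \xrightarrow{\ p\ } \PMod(\Sigma_g)[2] \to 1,
\]
in which $\ker(p)$ is the point-pushing subgroup $\pi_1(\PConf_n(\Sigma_g))$ furnished by the Birman exact sequence and is automatically contained in $\PMod(\Sigma_g,Z)[2]$. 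By the standard fact that a group is generated by a generating set of any normal subgroup together with lifts of a generating set of the quotient, it therefore suffices to (a) generate $\ker(p)$ and (b) lift a generating set of $\PMod(\Sigma_g)[2]$, in each case by elements of the two prescribed types.

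For the quotient I would invoke the known generation of $\PMod(\Sigma_g)[2]$ by squares of Dehn twists $T_{\bar c}^2$ about simple closed curves $\bar c \subset \Sigma_g$. Any such $\bar c$ may be isotoped off the finite set $Z$, hence is represented by a simple closed curve $c \subset \Sigma_g \setminus Z$ satisfying $p(T_c^2) = T_{\bar c}^2$. Thus every generator of $\PMod(\Sigma_g)[2]$ lifts to a ``squared twist'' of exactly the required form.

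For the kernel I would use the standard point-push identity $\Push(\gamma) = T_a T_b^{-1}$, where $\gamma$ is a simple loop based at a point of $Z$ and $a, b$ are the two boundary curves of an annular neighborhood of $\gamma$ (compare \cite{FarbMarg}). Here $a \cup b$ bounds precisely this annulus, and the annulus contains exactly the pushed marked point, so each such push is a generator of the prescribed ``point--push'' type. It then remains to verify that $\ker(p) = \pi_1(\PConf_n(\Sigma_g))$ is generated by pushes along \emph{simple} loops; this follows by iterating the Birman exact sequence and noting that each factor (a fundamental group of a surface with punctures) admits a generating set of simple loops, with the braiding generators (pushing one marked point around another) likewise realized by simple loops.

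The main obstacle, and the only substantial external input, is the generation of the classical level $2$ subgroup $\PMod(\Sigma_g)[2]$ by squares of Dehn twists; the rest is bookkeeping with the Birman sequence and the point-push formula. I would isolate this as a cited theorem and, if the reference requires $g \ge 2$, treat the base case $g = 1$ separately, where $\PMod(\Sigma_1)[2]$ corresponds to $\Gamma(2) \le \SL(2,\Z)$ and is visibly generated by the squares of the two standard Dehn twists.
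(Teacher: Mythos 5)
Your proposal follows essentially the same route as the paper: pull back along the forgetful map $p$, cite the generation of the closed level-$2$ group by squared twists (the paper uses Humphries), and generate the kernel $\PB(\Sigma_g,Z)$ by point-pushes along simple loops via the Birman/Fadell--Neuwirth machinery. The only slip is in your optional $g=1$ aside: $\Gamma(2)\cong\{\pm I\}\times F_2$ with the free factor generated by the squares of the two standard twists, so those two squares alone miss $-I$ and do \emph{not} generate $\Gamma(2)$; one must use squared twists about \emph{all} simple closed curves (whose normal closure does contain $-I$). Since the paper works with $g\ge 2$ throughout, this does not affect the main argument.
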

\begin{proof}
According to \cite[Proposition 2.1]{humphries}, the closed level-$2$ mapping class group $\Mod(\Sigma_g)[2]$ is generated by the set of square-twists. By definition, $\PMod(\Sigma_g,Z)[2] = p^{-1}(\PMod(\Sigma_g)[2])$, where $p: \PMod(\Sigma_g,Z) \to \PMod(\Sigma_g)$ is the forgetful map. The kernel $\ker(p) = \PB(\Sigma_g,Z)$ is the {\em pure surface braid group on $n$ strands}. It is a classical fact (essentially a consequence of the Fadell-Neuwirth fibration; c.f. \cite[Section 9.1]{FarbMarg}) that $\PB(\Sigma_g,Z)$ is generated by point--push maps.
\end{proof}

\section{From framings to crossed homomorphisms}\label{section:Xhom}
In this section we begin the proof of Theorems \ref{theorem:main} and \ref{theorem:main2} in earnest. In Lemma \ref{proposition:oddtrace}, we use the winding number function associated to $\phi$ to define what turns out to be a crossed homomorphism on $\PMod(\Sigma_g,Z)$. In Lemma \ref{proposition:induced}, we show that this crossed homomorphism is pulled back from a crossed homomorphism $\Theta_\phi$ on $\PAut(H_1(\Sigma_g,Z;\Z))$. In the intermediate Section \ref{section:structure}, we present some basic results about the structure of $\PAut(H_1(\Sigma_g,Z;\Z))$ needed in the sequel. 

\subsection{A crossed homomorphism on the mapping class group}
\begin{lemma}\label{proposition:oddtrace}
Let $\phi$ be a framing of $(\Sigma_g, Z)$, and let $\Delta_\phi: \PMod(\Sigma_g, Z) \times \mathcal S \to \Z/2\Z$ be defined by
\[
\Delta_\phi(f,c) = \phi(f(c)) - \phi(c) \pmod 2.
\]
Then $\Delta_\phi$ determines a crossed homomorphism 
\[
\overline{\Delta_\phi}: \PMod(\Sigma_g,Z) \to H^1(\Sigma_g;\Z/2\Z)
\]
by the formula
\[
\overline{\Delta_\phi}(f)([c]) = \Delta_\phi(f,c).
\]
\end{lemma}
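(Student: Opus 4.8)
The plan is to verify three things about the formula $\overline{\Delta_\phi}(f)([c]) = \Delta_\phi(f,c)$: that the right-hand side depends only on the class $[c] \in H_1(\Sigma_g;\Z/2\Z)$ (well-definedness), that the resulting function is a homomorphism $H_1(\Sigma_g;\Z/2\Z) \to \Z/2\Z$ (so that $\overline{\Delta_\phi}(f)$ genuinely lands in $H^1$), and finally that $\overline{\Delta_\phi}$ satisfies the crossed-homomorphism relation. The engine for all three is Lemma \ref{lemma:HJ}: both twist-linearity and homological coherence are relations among winding numbers of curve configurations that are preserved by any homeomorphism (since $f T_c f^{-1} = T_{f(c)}$ and $f$ carries subsurfaces to subsurfaces of the same Euler characteristic), so the difference $\Delta_\phi(f,c)=\phi(f(c))-\phi(c)$ automatically respects them. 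The other key input is the transitivity of $\PMod(\Sigma_g,Z)[2]$ on simple closed curves in a fixed mod-$2$ class (Proposition \ref{proposition:level2trans}).

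I expect the main obstacle to be well-definedness, precisely because $\phi$ itself does \emph{not} descend to $H_1(\Sigma_g;\Z/2\Z)$ when some signature is odd --- only its change $\Delta_\phi$ does. Here is the approach. By Proposition \ref{proposition:level2trans} it suffices to show $\Delta_\phi(f, h(c)) = \Delta_\phi(f,c)$ for every $h \in \PMod(\Sigma_g,Z)[2]$. The elementary cocycle identity $\phi(f h(c)) - \phi(c) = [\phi(fh(c)) - \phi(h(c))] + [\phi(h(c)) - \phi(c)]$ rearranges this difference into $\Delta_\phi(fhf^{-1}, f(c)) - \Delta_\phi(h,c)$. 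I would then compute $\Delta_\phi(h,\cdot)$ explicitly for $h \in \PMod(\Sigma_g,Z)[2]$ using the generators of Proposition \ref{proposition:level2gens}: a squared twist $T_a^2$ contributes $2\pair{c,a}\phi(a) \equiv 0 \pmod 2$ by twist-linearity, while a point-push $T_aT_b^{-1}$ around a marked point of signature $\kappa$ contributes $\pair{c,a}(\phi(a)-\phi(b))$, and homological coherence on the pair of pants cut out by $a, b$ and the marked point shows $\phi(a)-\phi(b) \equiv \kappa \pmod 2$. Summing over a generator word gives $\Delta_\phi(h,c) = \pair{u_h,[c]} \pmod 2$ for a class $u_h \in H_1(\Sigma_g;\Z/2\Z)$ depending only on $h$; this already exhibits $\Delta_\phi(h,\cdot)$ as well-defined and linear on homology.

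The crux is then the naturality relation $u_{fhf^{-1}} = f_*u_h$. This holds because $f \in \PMod(\Sigma_g,Z)$ is pure: conjugating a point-push $T_aT_b^{-1}$ by $f$ yields the point-push $T_{f(a)}T_{f(b)}^{-1}$ around the \emph{same} marked point $f(p_i)=p_i$, hence of the same signature $\kappa$, and pushes the relevant homology class forward by $f_*$. Granting this, since $f_*$ preserves the mod-$2$ intersection pairing and $[f(c)] = f_*[c]$, we get $\Delta_\phi(fhf^{-1},f(c)) = \pair{f_*u_h, f_*[c]} = \pair{u_h,[c]} = \Delta_\phi(h,c)$, so the rearranged difference vanishes and well-definedness follows.

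For the homomorphism property I would argue that $\overline{\Delta_\phi}(f)$ respects the relations among homology classes of simple closed curves coming from Lemma \ref{lemma:HJ}. Given simple closed curves $c_1, c_2$ meeting once, the curve $c_3 = T_{c_1}(c_2)$ represents $[c_1]+[c_2]$, and twist-linearity together with $f T_{c_1} f^{-1} = T_{f(c_1)}$ gives $\Delta_\phi(f,c_3) \equiv \Delta_\phi(f,c_1)+\Delta_\phi(f,c_2)$; when $[c_1],[c_2]$ have zero intersection one instead realizes $[c_1],[c_2],[c_1]+[c_2]$ by disjoint curves bounding a pair of pants and applies homological coherence (whose value $\chi = -1$ is preserved by $f$ and cancels in the difference) to the same effect. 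As every class in $H_1(\Sigma_g;\Z/2\Z)$ is represented by a simple closed curve and these two cases exhaust the possible mod-$2$ intersection numbers, $\overline{\Delta_\phi}(f)$ extends to a homomorphism, i.e. an element of $H^1(\Sigma_g;\Z/2\Z)$. Finally, the crossed-homomorphism relation is the descent to homology of the identity $\Delta_\phi(f_1f_2, c) = \Delta_\phi(f_1, f_2(c)) + \Delta_\phi(f_2,c)$, valid because the first term depends only on $[f_2(c)] = \Psi_2(f_2)[c]$; interpreting this $\Psi_2$-twist as the natural action of $\PMod(\Sigma_g,Z)$ on $H^1(\Sigma_g;\Z/2\Z)$ recovers exactly the cocycle condition for a crossed homomorphism.
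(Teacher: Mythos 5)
Your proposal is correct and follows essentially the same route as the paper's proof: reduce well-definedness to the level-$2$ generators of Proposition \ref{proposition:level2gens} via Proposition \ref{proposition:level2trans} and the cocycle identity, compute squared twists and point-pushes with twist-linearity and homological coherence, use invariance of the mod-$2$ intersection pairing (and purity of $f$) to handle conjugation, and prove additivity by the same two-case analysis ($T_{c_1}(c_2)$ when the classes intersect once, a pair of pants when they are disjoint). Your repackaging of the level-$2$ computation as $\Delta_\phi(h,\cdot)=\pair{u_h,\cdot}$ with the naturality $u_{fhf^{-1}}=f_*u_h$ is only a cosmetic reorganization of the paper's Step 1.
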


\begin{remark}\label{remark:kernel}
Observe that by construction, $\PMod(\Sigma_g,Z)[\phi] \le \ker(\overline{\Delta}_\phi)$.
\end{remark}

\begin{proof}
We begin with a simple but crucial observation: $\Delta_\phi$ satisfies a cocycle condition. For $f,g \in \PMod(\Sigma_g,Z)$ and any $c \in \mathcal S$, it follows easily from the definition that
\begin{equation}\label{equation:cocycle}
\Delta_\phi(fg,c) =  \Delta_\phi(f, g(c)) + \Delta_\phi(g, c).
\end{equation}

We divide the remainder of the proof into two steps. 

\para{Convention} Throughout this section, all arithmetic is taken mod $2$. The occasional presence of minus signs serves to help the reader navigate the logic of the calculations.

\para{Step 1: Descending to mod $\mathbf{2}$ homology} We suppose that $c, c' \in \mathcal S$ satisfy $[c] = [c']$ in $H_1(\Sigma_g;\Z/2\Z)$, and we wish to show that $\Delta_\phi(f,c) = \Delta_\phi(f, c')$ for $f \in \PMod(\Sigma_g,Z)$ arbitrary. If $[c] = [c']$, then by Proposition \ref{proposition:level2trans}, $c' = g(c)$ for some $g \in \PMod(\Sigma_g,Z)[2]$. Thus
\begin{align*}
\Delta_\phi(f, c') &= \Delta_\phi(f, g(c)) \\ &= \phi(fg(c)) - \phi(g(c))\\ &= \phi(fgf^{-1}(f(c))) - \phi(g(c)).
\end{align*}

By Proposition \ref{proposition:level2gens}, $g$ is a product of two kinds of mapping classes: squared twists $T_a^2$ and point--push maps. By the cocycle condition \eqref{equation:cocycle}, it suffices to examine the expression $\phi(fgf^{-1}(f(c))) - \phi(g(c))$ for $g$ one of these two forms. We claim that in either case,
\[
\phi(fgf^{-1}(f(c))) - \phi(g(c)) = \phi(f(c)) - \phi(c) = \Delta(f, c),
\]
thereby completing Step 1. Observe that for both classes of generators, $fgf^{-1}$ is an element of the same form as $g$. In the case $g = T_a^2$, the twist--linearity formula (Lemma \ref{lemma:HJ}.\ref{TL}) shows that
\begin{equation}\label{formula:squaretwist}
\phi(T_a^2(c)) = \phi(c) + 2 \pair{c,a}\phi(a) = \phi(c) 
\end{equation}
and therefore also $\phi(fgf^{-1}(f(c))) = \phi(f(c))$.

Suppose now that $g = T_{a_i} T_{a_i'}^{-1}$ with $a_i,a_i'$ cobounding an annulus containing the marked point $p_i$ of signature $-1-\kappa_i$. Then $[a_i] = [a_i']$ in $H_1(\Sigma_g;\Z/2\Z)$. The homological coherence property (Lemma \ref{lemma:HJ}.\ref{HC}) shows that $\phi(a_i) + \phi(a_i') = \kappa_i$. Applying the twist-linearity formula (Lemma \ref{lemma:HJ}.\ref{TL}), we find
\begin{equation*}
\phi(T_{a_i} T_{a_i'}^{-1}(c)) = \phi(c) + (\phi(a_i) + \phi(a_i'))\pair{[a_i], c} = \phi(c) + \kappa_i \pair{[a_i],c}.
\end{equation*}
and so
\begin{equation}\label{formula:ptpush}
\Delta_\phi(T_{a_i} T_{a_i'}^{-1},c) = \kappa_i \pair{[a_i],c}.
\end{equation}
Since $\pair{\cdot, \cdot}$ is invariant under the action $\Sp(2g,\Z/2\Z)$, this computation also shows that
\[
\Delta_\phi(fT_{a_i} T_{a_i'}^{-1}f^{-1},f(c)) = \kappa_i \pair{f([a_i]),f(c)} = \kappa_i \pair{[a_i],c} = \Delta_\phi(T_{a_i}T_{a_i'}^{-1},c)
\]
as required. 

\begin{remark}\label{remark:stronger}
We note here for later use that this argument actually establishes something stronger, namely, that if the signature of each marked point is odd then the value $\phi(c) \pmod 2$ is well-defined as a function on $H_1(\Sigma_g;\Z/2\Z)$. In particular, observe that \eqref{formula:squaretwist} shows that $\phi(T_a^2(c)) = \phi(c)$ for arbitrary curves $a, c$. When the signature of each marked point is odd then the $\kappa_i$ are even, and  \eqref{formula:ptpush} then shows that $\phi(T_a T_b^{-1}(c)) = \phi(c)$ as well.
\end{remark}

\para{Step 2: Additivity}  Following Step 1, we have established that $\Delta_\phi$ descends to a set-theoretic map 
\[
\overline{\Delta_\phi}: \PMod(\Sigma_g, Z) \times H_1(\Sigma_g; \Z/2\Z) \to \Z/2\Z
\]
that satisfies the cocycle condition \eqref{equation:cocycle}. In this step we complete the process of showing that $\Delta_\phi$ is a crossed homomorphism by showing that $\Delta_\phi$ is additive in the second argument. We fix $f \in \PMod(\Sigma_g,Z)$ and choose $x, y \in H_1(\Sigma_g; \Z/2\Z)$. There are two cases to consider: either $\pair{x,y} = 1$ or else $\pair{x,y} = 0$.

Suppose first that $\pair{x,y} = 1 \pmod 2$. Represent $x,y$ by simple closed curves $a,b$ satisfying $i(a,b) = 1$; then $[T_a(b)] = x+y$. By Step 1 and the cocycle condition \eqref{equation:cocycle},
\[
\Delta_\phi(f, x+y) = \Delta_\phi(f, T_a(b)) = \Delta_\phi(f T_a, b) + \Delta_\phi(T_a,b).
\]
We find that 
\begin{align*}
\Delta_\phi(f T_a, b) &=  \phi(f T_a(b)) - \phi(b) \\&= \phi(f T_a f^{-1} (f(b))) - \phi(b) \\&= \Delta_\phi(T_{f(a)}, f(b)) + \Delta_\phi(f,b).
\end{align*}
To evaluate the expressions $\Delta_\phi(T_{f(a)}, f(b))$ and $\Delta_\phi(T_a,b)$, we appeal to the definition of $\Delta_\phi$ and the twist--linearity formula (Lemma \ref{lemma:HJ}.\ref{TL}):
\[
\Delta_\phi(T_a,b) = \phi(T_a(b)) - \phi(b) = \pair{b,a}\phi(a) =\phi(a).
\]
Likewise,
\[
\Delta_\phi(T_{f(a)}, f(b))  = \pair{f(b),f(a)}\phi(f(a)) = \phi(f(a)),
\]
since $\pair{f(b), f(a)} = \pair{b,a} = 1$. Altogether, we have shown
\begin{align*}
\Delta_\phi(f, T_a(b)) &= \Delta_\phi(f T_a, b) + \Delta_\phi(T_a,b) \\&= \phi(f(a)) - \phi(a) + \Delta_\phi(f,b) \\&= \Delta_\phi(f,a) + \Delta_\phi(f,b)
\end{align*}
as required.

The other case $\pair{x,y} = 0$ proceeds similarly, replacing $T_a$ with a different mapping class. Given $x,y$ satisfying $\pair{x,y} = 0$, represent $x,y$ by simple closed curves $a,b$ satisfying $i(a,b) = 0$. Let $c$ be a curve such that $a \cup b \cup c$ bounds a pair of pants on $(\Sigma_g,Z)$ containing no marked points. Then $[c] = x+y$. We choose $g \in \PMod(\Sigma_g,Z)$ such that $g(a) = c$. As before,
\[
\Delta_\phi(f, x+ y) = \Delta_\phi(f,g(a)) = \Delta_\phi(fg, a) + \Delta_\phi(g,a).
\]
We analyze the first summand $\Delta_\phi(fg,a)$ as before:
\[
\Delta_\phi(fg,a) = \phi(fg(a)) - \phi(a) = \phi(fgf^{-1}(f(a))) - \phi(a) = \Delta_\phi(fgf^{-1},f(a)) + \Delta_\phi(f,a). 
\]
We are left with evaluating the expressions $\Delta_\phi(g,a)$ and $\Delta_\phi(fgf^{-1},f(a))$. The following expression holds by homological coherence (Lemma \ref{lemma:HJ}.\ref{HC}):
\[
\phi(a) + \phi(b) + \phi(c) = 1.  
\]
Thus,
\begin{equation}\label{ga}
\Delta_\phi(g,a) = \phi(g(a)) - \phi(a) = \phi(c) - \phi(a) = \phi(b) + 1.
\end{equation}

The same relation holds among the $\phi$-values of $f(a), f(b), f(c)$, showing that
\begin{equation}\label{fgfa}
\Delta_\phi(fgf^{-1}, f(a)) = \phi(f(b)) + 1.
\end{equation}
Adding together the three contributions $\Delta_\phi(f,a)$, \eqref{ga}, \eqref{fgfa} to $\Delta(f,g(a))$, we have
\begin{align*}
\Delta_\phi(f, x+y) &= \Delta_\phi(f, ga) \\&= \Delta_\phi(f,a) + \phi(f(b)) + 1 + \phi(b) + 1 \\&= \Delta_\phi(f,a) + \Delta_\phi(f,b)
\end{align*}
as required.
\end{proof}

\subsection{The structure of PAut$\mathbf{(H_1(\Sigma_g,Z;\Z))}$} \label{section:structure} Before proceeding to the second key result of the section (Lemma \ref{proposition:induced}), we must first consider some of the basic structural properties of the groups $\PMod(\Sigma_g,Z)$ and $\PAut(H_1(\Sigma_g,Z;\Z))$. We recall that the long exact sequence for the pair $(\Sigma_g, Z)$ specializes to the following short exact sequence:
\[
0 \to H_1(\Sigma_g,\Z) \to H_1(\Sigma_g, Z;\Z) \to \widetilde H_0(Z;\Z) \to 0.
\]
The action of $\PMod(\Sigma_g,Z)$ on $H_1(\Sigma_g,Z;\Z)$ preserves the subspace $H_1(\Sigma_g,\Z)$ and the action there preserves the algebraic intersection pairing $\pair{\cdot, \cdot}$. Define
\[
\RelAut(H_1(\Sigma_g,Z;\Z)) := \Hom(\widetilde H_0(Z;\Z), H_1(\Sigma_g;\Z)).
\]
The group $\PAut(H_1(\Sigma_g,Z;\Z))$ is then characterized by the following short exact sequence:
\begin{equation}\label{equation:aut}
1 \to \RelAut(H_1(\Sigma_g,Z;\Z)) \to \PAut(H_1(\Sigma_g,Z;\Z)) \to \Sp(2g,\Z) \to 1.
\end{equation}
Note that with the definitions above, we only consider automorphisms inducing a trivial action on $\widetilde H_0(Z;\Z)$. If we were to consider $\Psi^{rel}(\Mod(\Sigma_g,Z))$ (i.e. removing the purity assumption), we would need to enlarge $\RelAut(H_1(\Sigma_g,Z;\Z))$ by taking the semi-direct product with (a subgroup of) the symmetric group of $Z$.

\subsection{Crossed homomorphisms on PAut$\mathbf{(H_1(\Sigma_g,Z;\Z))}$}
\begin{lemma}\label{proposition:induced}
The crossed homomorphism $\overline{\Delta_\phi}$ is induced from a crossed homomorphism
\[
\Theta_\phi: \PAut(H_1(\Sigma_g, Z;\Z)) \to H^1(\Sigma_g; \Z/2\Z).
\]
\end{lemma}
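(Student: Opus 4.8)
The plan is to read ``induced'' as the assertion that $\overline{\Delta_\phi}$ factors through the relative homological representation, i.e. that there is a crossed homomorphism $\Theta_\phi$ with $\overline{\Delta_\phi} = \Theta_\phi \circ \Psi^{rel}$. First I would record the structural input that turns this into a clean descent problem: the $\PMod(\Sigma_g,Z)$--module structure on the target $H^1(\Sigma_g;\Z/2\Z)$ is by definition the one coming from the action on absolute homology, so it factors through $\Psi_2$ and a fortiori through $\Psi^{rel}$. Consequently, for $f \in \ker(\Psi^{rel})$ the group acts trivially on the coefficients, and the crossed homomorphism relation degenerates to ordinary additivity, so $\overline{\Delta_\phi}|_{\ker(\Psi^{rel})}$ is a genuine homomorphism. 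The standard descent lemma for crossed homomorphisms then applies: because the coefficient action factors through $\Psi^{rel}$, the map $\overline{\Delta_\phi}$ descends to a crossed homomorphism on $\im(\Psi^{rel})$ precisely when it vanishes on $\ker(\Psi^{rel})$. The proof thus reduces to two points: (1) $\Psi^{rel}$ is surjective onto $\PAut(H_1(\Sigma_g,Z;\Z))$, so the descended map is defined on all of $\PAut$; and (2) $\overline{\Delta_\phi}(\ker(\Psi^{rel})) = 0$.

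For (1) I would argue directly from the extension \eqref{equation:aut}. Dehn twists surject onto $\Sp(2g,\Z)$ under the symplectic representation, so $\Psi^{rel}$ surjects onto the quotient. On the other hand, the point-push of a marked point $p_i$ around a loop $\gamma$ acts trivially on absolute homology and sends the relative class of an arc ending at $p_i$ to itself plus $[\gamma]$; letting $i$ and $[\gamma] \in H_1(\Sigma_g;\Z)$ vary, these realize every element of $\RelAut(H_1(\Sigma_g,Z;\Z)) = \Hom(\widetilde H_0(Z;\Z), H_1(\Sigma_g;\Z))$. Combining these facts with \eqref{equation:aut} shows $\Psi^{rel}$ is onto.

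The heart of the matter is (2), and here the behavior genuinely depends on the parity of $\sing$. When every $\kappa_i$ is even, Remark \ref{remark:stronger} shows that $\phi \pmod 2$ descends to a quadratic form $q$ on $H_1(\Sigma_g;\Z/2\Z)$, and then for \emph{every} $f$ one has $\overline{\Delta_\phi}(f)(x) = q(\Psi_2(f)x) - q(x)$; this manifestly factors through $\Psi_2$, hence through $\Psi^{rel}$, and already exhibits $\Theta_\phi$ explicitly on this locus. The substance is therefore the case in which some $\kappa_i$ is odd, where $\phi \pmod 2$ is not a homological invariant and one must check vanishing on $\ker(\Psi^{rel})$ by hand. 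I would do this on a generating set for the relative Torelli group $\ker(\Psi^{rel})$ --- separating twists, bounding-pair maps whose cobounded subsurface contains no marked points, and point-pushes of marked points around separating curves --- and evaluate $\overline{\Delta_\phi}$ on each using the twist-linearity and homological coherence of Lemma \ref{lemma:HJ}, as in the computations \eqref{formula:squaretwist} and \eqref{formula:ptpush}. In each case the answer vanishes mod $2$: for separating twists and separating point-pushes the relevant class is null-homologous, so the intersection pairings vanish; and for a marked-point-free bounding pair $T_{c_1}T_{c_2}^{-1}$ the computation produces the factor $\phi(c_1) - \phi(c_2) = \chi(S) = -2h$, which is even because $S$ is a genus-$h$ surface with two boundary components and no marked points.

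I expect step (2) in the odd case to be the main obstacle, for two reasons. First, it requires an honest generating set for $\ker(\Psi^{rel})$ (one must confirm the three families above suffice, or else reduce to them), which is exactly the place where the marked-point structure enters. Second, it is here that one sees \emph{why} the factorization can hold at all: taking a twist $T_{a'}$ with $a' = \Push(p_i,\gamma)(a)$ shows that $\phi(a) \pmod 2$ can change under a point-push, but only when the push fails to commute with the transvection $\Psi^{rel}(T_a)$ --- that is, only when $\Psi^{rel}$ itself changes. The content of the lemma is thus that every mod-$2$ change in winding number is recorded by the relative homological action, and the generator computation above is the concrete verification of this principle.
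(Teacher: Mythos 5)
Your overall strategy is the paper's: read ``induced'' as factorization through $\Psi^{rel}$, observe that the coefficient action on $H^1(\Sigma_g;\Z/2\Z)$ factors through $\Psi^{rel}$ so that descent is equivalent to surjectivity of $\Psi^{rel}$ plus vanishing of $\overline{\Delta_\phi}$ on $\mathcal I^{rel}(\Sigma_g,Z)=\ker(\Psi^{rel})$, and then check vanishing on generators via twist-linearity and homological coherence. The surjectivity argument and the strict bounding-pair computation ($\phi(\tilde a)-\phi(\tilde b)=\chi(S)\equiv 0 \pmod 2$) match the paper; the case split on the parity of $\sing$ is harmless but unnecessary, since the generator computation is uniform.

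The genuine gap is the one you flag but do not resolve: the claim that $\ker(\Psi^{rel})$ is generated by separating twists, strict bounding-pair maps, and point-pushes of marked points around separating curves. Lifting the bounding-pair (resp.\ separating-twist) generators of $\mathcal I(\Sigma_g)$ to strict bounding pairs handles the quotient in the extension $1 \to K \to \mathcal I^{rel}(\Sigma_g,Z) \to \mathcal I(\Sigma_g) \to 1$, but the subgroup $K = \ker\bigl(\Psi^{rel}|_{\PB(\Sigma_g,Z)}\bigr)$ is the kernel of the surjection $\PB(\Sigma_g,Z) \onto \RelAut(H_1(\Sigma_g,Z;\Z))$, and this is strictly larger than the subgroup generated by pushes around separating loops: it also contains commutators of point-pushes and simultaneous pushes of several marked points around homologous nonseparating loops, and reducing these to your three families is exactly the nontrivial point. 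The paper avoids needing any generating set for $K$: since $\PB(\Sigma_g,Z)$ acts trivially on $H^1(\Sigma_g;\Z/2\Z)$, the restriction $\overline{\Delta_\phi}|_{\PB(\Sigma_g,Z)}$ is an honest homomorphism, and formula \eqref{formula:ptpush} shows that on the point-push generators $T_{a_i}T_{a_i'}^{-1}$ of all of $\PB(\Sigma_g,Z)$ it is given by $c \mapsto \kappa_i\pair{[a_i],c}$, which is the pullback under $\Psi^{rel}$ of the homomorphism $v_{\sing}^*$ on $\RelAut(H_1(\Sigma_g,Z;\Z))$. Two homomorphisms agreeing on a generating set agree, so $\overline{\Delta_\phi}|_{\PB(\Sigma_g,Z)}$ factors through $\Psi^{rel}$ and in particular kills $K$. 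Replacing your third family of generators with this factorization argument closes the gap; the rest of your proposal then goes through.
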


\begin{proof}
Define the ``relative Torelli group''
\[
\mathcal I^{rel}(\Sigma_g,Z) := \ker \Psi^{rel}.
\]
The claim amounts to showing that the restriction of $\overline{\Delta_\phi}$ to $\mathcal I^{rel}(\Sigma_g, Z)$ is trivial. We consider the following commutative diagram of short exact sequences, where the rows are induced by the forgetful map $(\Sigma_g, Z) \to \Sigma_g$ and the columns are induced by considering the action of the (relative/absolute) mapping class group on (relative/absolute) homology. By definition, $K := \ker( \Psi^{rel} \mid_{\PB(\Sigma_g, Z)})$.
\[
\xymatrix{
		&	1 \ar[d]								&	1 \ar[d]							&	1 \ar[d]					&	\\
1 \ar[r]	& K \ar[r] \ar[d]								& \mathcal I^{rel}(\Sigma_g,Z) \ar[r]\ar[d]		& \mathcal I(\Sigma_g) \ar[r]\ar[d]	& 1	\\
1 \ar[r]	& \PB(\Sigma_g, Z) \ar[r] \ar[d]^{\Psi^{rel}}			& \PMod(\Sigma_g,Z) \ar[r] \ar[d]^{\Psi^{rel}}	& \PMod(\Sigma_g) \ar[r] \ar[d]^{\Psi}	&1	\\
1 \ar[r]	& \RelAut(H_1(\Sigma_g,Z;\Z))\ar[r] \ar[d]			& \PAut(H_1(\Sigma_g,Z;\Z)) \ar[r] \ar[d]		& \Sp(2g,\Z)	\ar[r] \ar[d]	& 1	\\
		&	1									& 	1								&	1						&	
}
\]

To prove the claim, it suffices to show that $\overline{\Delta_\phi}(f) = 0$ for $f \in \mathcal I^{rel}(\Sigma_g,Z)$ a generator. Below, we determine a generating set for $\mathcal I^{rel}(\Sigma_g,Z)$.

For $g \ge 3$, the Torelli group $\mathcal I(\Sigma_g)$ is generated by ``bounding pair maps:'' these are elements of the form $T_a T_b^{-1}$ where $a, b$ are disjoint simple closed curves on $\Sigma_g$ that bound a subsurface $S \subset \Sigma_g$. For any such pair, there are simple closed curves $\tilde a, \tilde b$ on $(\Sigma_g, Z)$ which are isotopic to $a$ and $b$ after forgetting $Z$ and so that $\tilde a \cup \tilde b$ bounds a subsurface $\tilde S \subset (\Sigma_g, Z)$ which does not contain points of $Z$. We call the corresponding mapping class $T_{\tilde a} T_{\tilde b}^{-1}$ a {\em strict bounding pair map}. It is easy to see that 
\[
T_{\tilde a} T_{\tilde b}^{-1} \in \mathcal I^{rel}(\Sigma_g,Z),
\]
and hence $\mathcal I^{rel}(\Sigma_g,Z)$ is generated by strict bounding pair maps and $K$. 

Let $T_{\tilde a} T_{\tilde b}^{-1}$ be a strict bounding pair map and let $c \subset (\Sigma_g, Z)$ be a simple closed curve. Since $[\tilde a] = [\tilde b]$ as elements of $H_1(\Sigma_g;\Z/2\Z)$, we find
\begin{eqnarray*}
\Delta_\phi(T_{\tilde a} T_{\tilde b}^{-1}, c) &=&  \phi(T_{\tilde a} T_{\tilde b}^{-1}(c)) - \phi(c)\\
								&=& \phi(c) + \pair{c, \tilde a} \phi(\tilde a) - \pair{c, \tilde b}\phi(b) - \phi(c)\\
								&=& \pair{c, \tilde a}(\phi(\tilde a) - \phi(\tilde b)).
\end{eqnarray*}
Since $\tilde a \cup \tilde b$ is a strict bounding pair, the bounded subsurface $\tilde S$ has two boundary components and no punctures and hence has even Euler characteristic. By homological coherence (Lemma \ref{lemma:HJ}.\ref{HC}), $\phi(\tilde a) = \phi(\tilde b) \pmod 2$. The above then shows that $\Delta_\phi(T_{\tilde a} T_{\tilde b}^{-1}) = 0$ as required.

It remains to show that $\Delta_\phi(f, c) = 0$ for $f \in K$ arbitrary. For this, it suffices to show that the restriction of $\Delta_\phi$ to $\PB(\Sigma_g,Z)$ factors through $\Psi^{rel}$. As noted in the proof of Proposition \ref{proposition:level2gens}, the group $\PB(\Sigma_g,Z)$ is generated by elements of the form $T_{a_i} T_{a_i'}^{-1}$, where $a_i \cup a_i'$ cobound an annulus on $\Sigma_g$ containing a unique point $p_i \in Z$. Formula \eqref{formula:ptpush} above exactly shows that $\Delta_\phi$ factors through $\Psi^{rel}$ for elements of this form, and the claim for $g \ge 3$ follows.

For $g = 2$, the Torelli group is instead generated by ``separating twist maps''. A similar argument shows that the result holds in this case as well. 
\end{proof}

\section{The action of Torelli on framings}\label{section:action}

\subsection{The Torelli action} The results of the previous section imply that if $f \in \PMod(\Sigma_g,Z)$ preserves a framing $\phi$, then $\Theta_\phi(\Psi^{rel}(f)) = 0$. To establish the opposite containment, we must understand how $\ker(\Psi^{rel})$ acts on the set of framings. With Theorem \ref{theorem:main2} in mind, we work here in the setting of a surface $\Sigma_{g,n}$ with $n \ge 1$ boundary components, equipped with a {\em relative} framing $\phi$.

If framings $\phi$ and $\psi$ are relatively isotopic, then by definition they restrict to the same framing of $\partial \Sigma_{g,n}$. Proposition \ref{proposition:Taction} below shows that in this case, orbits are classified by the data of a ``$q$-vector.'' To define this object, let $\mathcal B = \{x_1, \dots, y_g, a_2, \dots, a_n\}$ be a distinguished geometric basis for $\Sigma_{g,n}$ (c.f. Section \ref{section:DGB}). We call the simple closed curves $\{x_1, \dots, y_g\}$ the {\em absolute elements}, and the arcs $\{a_2, \dots, a_n\}$ the {\em relative elements}. The {\em $q$-vector} $\vec{q}(\mathcal B, \phi) \in (\Z/2\Z)^{2g}$ is the element
\[
\vec q(\mathcal B, \phi) = (\phi(x_1), \dots, \phi(y_g)) \pmod 2.
\]

\begin{proposition}\label{proposition:Taction}
Fix $g \ge 2$ and $n \ge 1$, and let $\phi$ and $\psi$ be relative framings of $\Sigma_{g,n}$ that restrict to the same framing of $\partial \Sigma_{g,n}$. Suppose moreover that $\Arf(\phi) = \Arf(\psi)$ and that there exists a distinguished geometric basis $\mathcal B$ such that $\vec{q}(\mathcal B, \phi) = \vec q(\mathcal B, \psi)$. Then there exists $f \in \mathcal I^{rel}(\Sigma_{g,n})$ such that $f \cdot \phi = \psi$. 
\end{proposition}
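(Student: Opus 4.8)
The plan is to work entirely on the level of winding number functions. By Proposition~\ref{proposition:WNprops}(2) it suffices to produce $f\in\mathcal I^{rel}(\Sigma_{g,n})$ with $(f\cdot\phi)(b)=\psi(b)$ for each $b$ in the given distinguished geometric basis $\mathcal B$. I would organize this around the observation that, for a fixed framing $\delta$ of $\partial\Sigma_{g,n}$, the relative winding number functions restricting to $\delta$ form a torsor over $H^1(\Sigma_{g,n},\partial\Sigma_{g,n};\Z)$: the difference of two such functions is additive on $\mathcal S^+(\Sigma_{g,n})$ and, by homological coherence (Lemma~\ref{lemma:HJ}.\ref{HC}), factors through relative homology. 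Since $\mathcal I^{rel}(\Sigma_{g,n})$ acts trivially on $H_1(\Sigma_{g,n},\partial\Sigma_{g,n};\Z)$, the assignment $f\mapsto(f\cdot\phi)-\phi$ is a genuine homomorphism $\Phi_\phi:\mathcal I^{rel}(\Sigma_{g,n})\to H^1(\Sigma_{g,n},\partial\Sigma_{g,n};\Z)$. Thus $\psi$ lies in the $\mathcal I^{rel}$-orbit of $\phi$ precisely when $\psi-\phi\in\im\Phi_\phi$, and the proposition becomes the assertion that the three hypotheses --- equal boundary framing, equal Arf invariant, and a common $\vec q$-vector --- are together equivalent to this membership.

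The main computational input is the evaluation of $\Phi_\phi$ on Torelli generators via Lemma~\ref{lemma:HJ}. For a bounding pair map $T_cT_d^{-1}$ with $[c]=[d]$ in $H_1(\Sigma_{g,n};\Z)$, twist-linearity (Lemma~\ref{lemma:HJ}.\ref{TL}) yields the difference class
\[
\gamma\longmapsto \pair{\gamma,[c]}\,(\phi(d)-\phi(c)),
\]
and homological coherence applied to the cobounded subsurface (two boundary curves, no punctures, hence even Euler characteristic) forces $\phi(d)-\phi(c)$ to be even. Taking powers and letting $[c]$ range over classes dual to the absolute basis elements shows that $\Phi_\phi$ surjects onto the even classes in the absolute directions, via bounding pairs that can moreover be routed disjointly from the arcs. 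The relative (arc) directions are governed by the boundary Dehn twists $T_{\Delta_i}\in\mathcal I^{rel}(\Sigma_{g,n})$, which fix every closed-curve value and shift $\phi(a_j)$ by $\pm\phi(\Delta_i)$, so that the available parities there are controlled by the signatures $\phi(\Delta_i)$.

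It then remains to identify $\im\Phi_\phi$ with the subgroup of difference classes compatible with the stated invariants. One inclusion is immediate: the Arf invariant is preserved by all of $\Mod(\Sigma_{g,n})$ (Proposition~\ref{proposition:ModArfaction}), hence by $\mathcal I^{rel}$, while the $\vec q$-hypothesis records exactly that $\psi-\phi$ is even on the absolute elements of $\mathcal B$. The reverse inclusion is where I expect the genuine difficulty, and it is concentrated entirely in the relative directions: the even absolute shifts and the signature shifts $\pm\phi(\Delta_i)$ produced above do not visibly exhaust the Arf-consistent relative classes, and in particular realizing the required odd shifts of arc values at even-signature boundaries calls for Torelli elements beyond genuine bounding pairs and single boundary twists.

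To surmount this I would enlarge the repertoire by considering bounding pairs $T_cT_d^{-1}$ whose curves cobound a subsurface $S$ meeting prescribed boundary components of $\Sigma_{g,n}$ --- so that $[c]$ and $[d]$ agree only modulo the boundary classes --- together with correcting elements that restore triviality of the combination on relative homology, placing it in $\mathcal I^{rel}(\Sigma_{g,n})$. The point is that homological coherence on such an $S$ now contributes the signature terms $\phi(\Delta_i)$ to $\phi(d)-\phi(c)$, which is precisely the source of the odd relative shifts, and the Arf relation among the $\phi(\Delta_i)$ is exactly what guarantees that these combinations close up consistently. Carrying out this bookkeeping and matching it against the Arf- and $\vec q$-hypotheses identifies $\im\Phi_\phi$ with the desired subgroup; assembling the resulting elements into a single $f\in\mathcal I^{rel}(\Sigma_{g,n})$ with $(f\cdot\phi)(b)=\psi(b)$ for all $b\in\mathcal B$, Proposition~\ref{proposition:WNprops}(2) yields $f\cdot\phi=\psi$. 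The case $g=2$, where the Torelli group is generated differently, should be handled by the same supply of elements.
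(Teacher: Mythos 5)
Your reformulation---recasting the problem as computing the image of the difference homomorphism $\Phi_\phi:\mathcal I^{rel}(\Sigma_{g,n})\to H^1(\Sigma_{g,n},\partial\Sigma_{g,n};\Z)$ and checking that the hypotheses cut out exactly that image---is sound, and it is genuinely different in flavor from the paper's argument, which instead modifies a distinguished geometric basis $\mathcal B$ by explicit geometric moves (``connect-sums'' and twists) until its $\psi$-values match the $\phi$-values on $\mathcal B$ and then takes $f$ to be any mapping class carrying one basis to the other. Your handling of the absolute directions (genus-one bounding pairs realizing shifts of $\pm 2$ in each absolute dual direction, routed off the arcs) is correct and parallels the paper's connect-sum step.

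There is, however, a genuine gap exactly where you flag the difficulty: realizing odd shifts of the arc values at boundary components of even signature. Your proposed remedy---bounding pairs $T_cT_d^{-1}$ with $c\cup d$ cobounding a subsurface containing prescribed boundary components, ``together with correcting elements that restore triviality on relative homology''---is not a construction: such a $T_cT_d^{-1}$ transvects the relative classes of arcs ending on the enclosed boundary components, so it is not in $\mathcal I^{rel}(\Sigma_{g,n})$, and you never identify the correcting elements. The element that actually does the job is simpler: for indices $j_1,j_2$ with $\phi(\Delta_{j_1}),\phi(\Delta_{j_2})$ both even, take the \emph{single} twist $T_d$ about the curve $d$ cutting off a pair of pants containing $\Delta_{j_1}$ and $\Delta_{j_2}$. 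Since $[d]=[\Delta_{j_1}]+[\Delta_{j_2}]$ vanishes in $H_1(\Sigma_{g,n},\partial\Sigma_{g,n};\Z)$, one has $T_d\in\mathcal I^{rel}(\Sigma_{g,n})$, and homological coherence forces $\phi(d)$ to be odd, so $T_d$ flips the parities of the two arc values simultaneously while fixing everything else in $\mathcal B$. Because this device only flips such parities \emph{in pairs}, you must also prove---not merely assert as ``the Arf relation guarantees these combinations close up consistently''---that $\Arf(\phi)=\Arf(\psi)$ together with $\vec q(\mathcal B,\phi)=\vec q(\mathcal B,\psi)$ forces the number of indices $j$ with $\phi(\Delta_j)$ even and $\phi(a_j)\not\equiv\psi(a_j)\pmod 2$ to be even; this parity count, extracted from the relative summand of \eqref{equation:arf}, is the combinatorial heart of the proposition and is absent from your write-up. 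Finally, once parities agree you still need shifts of arc values by exactly $2$; the boundary twists $T_{\Delta_i}$ only shift by multiples of $\phi(\Delta_i)$, which may be $0$, so a further supply of elements (bounding pairs in the class of $[\Delta_j]$, or the paper's connect-sum moves applied to arcs) is required there as well.
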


\begin{proof}
The strategy is as follows: starting with the distinguished geometric basis $\mathcal B$, we successively modify the $\psi$-winding numbers of elements $b \in \mathcal B$ while preserving the relative homology classes. Under the hypotheses of the proposition, we will find that it is possible to construct a new distinguished geometric basis $\mathcal B'$ such that $\phi(b) = \psi(b')$ for all pairs of corresponding elements $b \in \mathcal B, b' \in \mathcal B'$, and moreover $[b] = [b']$ in $H_1(\Sigma_{g,n}, \partial \Sigma_{g,n}; \Z)$ for all such pairs. Then a mapping class $f$ taking $\mathcal B$ to $\mathcal B'$ will necessarily act trivially on $H_1(\Sigma_{g,n}, \partial \Sigma_{g,n}; \Z)$ and will also satisfy $f \cdot \phi = \psi$. Throughout, we will discuss the notion of a ``marking'' of $H_1(\Sigma_{g,n}, \partial \Sigma_{g,n};\Z)$, by which we mean a specified ordered basis. 

	\begin{figure}[ht] 
		\labellist
		\Small
		\pinlabel $x_j$ at 95 33
		\pinlabel $y_j$ at 90 20
		\pinlabel $S_j$ at 60 15
		\pinlabel $\epsilon$ at 120 53
		\pinlabel $b$ at 132 60
		\pinlabel $b'$ at 150 73
		\endlabellist
		\includegraphics[scale=1]{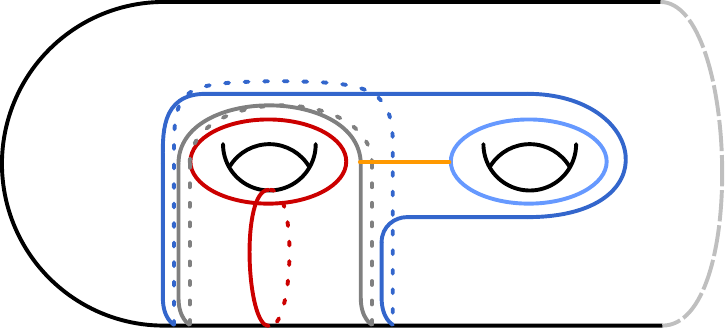}
		\caption{The connect-sum operation used in Step 1 of Proposition \ref{proposition:Taction}.}
		\label{figure:augmenteddual}
	\end{figure}

\para{Step 1: absolute elements} We first modify the absolute elements $\{x_1, \dots, y_g\} \in \mathcal B$ using an operation called a ``connect--sum,'' defined as follows: given some absolute element $b$ of the form $b = x_i, y_i$, we choose an index $j \ne i$ and consider the boundary $S_j$ of a regular neighborhood of $x_j \cup y_j$. It is possible to choose an arc $\epsilon$ connecting the left side of $b$ to $S_j$ that is otherwise disjoint from all elements of $\mathcal B$. The connect--sum $b'$ is then the curve obtained as the third boundary component of a regular neighborhood of $b \cup \epsilon \cup S_j$ (the other boundary components being $b$ and $S_j$).  By the homological coherence property (Lemma \ref{lemma:HJ}.\ref{HC}), $\phi(S_i) = 1$ when oriented with $x_j, y_j$ to the right (i.e. with $b'$ to the left). Applying homological coherence to the pair of pants $P$ bounded by $b, b', S_j$, we find
\[
\phi(b') = \phi(b) + \phi(S_j) + 1 = \phi(b) +2
\]
when $b'$ is oriented with $P$ to the right (i.e. when $[b] = [b']$). Likewise, if one takes $\epsilon$ to be an arc connecting the {\em right} side of $b$ to $S_j$, the resulting $b'$ satisfies $\phi(b') = \phi(b) - 2$.

The set of curves and arcs obtained by replacing $b$ with $b'$ is still a distinguished geometric basis determining the same marking of $H_1(\Sigma_{g,n}, \partial \Sigma_{g,n};\Z)$. By hypothesis, $\vec q(\mathcal B, \phi) = \vec q(\mathcal B, \psi)$ and so $\phi(b) = \psi(b) \pmod 2$. Thus by repeated use of the above operations, we can replace $\mathcal B$ with a new distinguished geometric basis $\mathcal B_{abs}'$ such that $\phi(b) = \psi(b')$ for each pair of matching absolute elements $b, b'$. 

\para{Step 2: relative elements} It remains to adjust the relative elements $a_2, \dots, a_n$ of $\mathcal B_{abs}'$ so that their $\psi$--winding numbers match the $\phi$--winding numbers of the relative elements of $\mathcal B$. By hypothesis, $\Arf(\phi) = \Arf(\psi)$. Since also $\vec{q}(\mathcal B, \phi) = \vec q(\mathcal B, \psi)$, it follows that the ``relative Arf invariants''
\[
\RelArf(\gamma) = \sum_{i =2}^n (\gamma(a_i) + \tfrac{1}{2})(\gamma(\Delta_i) + 1) \pmod 2
\]
must be equal for $\gamma = \phi, \gamma = \psi$. The equality of relative Arf invariants implies that there must be an {\em even} number of indices $j$ for which $\phi(\Delta_i) = \psi(\Delta_i)$ is even and $\phi(a_j) \ne \psi(a_j)$.

For a pair of indices $j_1, j_2$ with $\psi(\Delta_{j_1})$ and $\psi(\Delta_{j_2})$ both even, we choose a curve $d$ such that $i(d,b) = 1$ if $b = a_{j_1}$ or $b = a_{j_2}$, and otherwise equals zero. Such $d$ necessarily bounds a pair of pants with the other boundary curves given by $\Delta_{j_1}$ and $\Delta_{j_2}$. By hypothesis, since $\psi(\Delta_{j_1})$ and $\psi(\Delta_{j_2})$ are both even, the homological coherence property (Lemma \ref{lemma:HJ}.\ref{HC}) implies that $\psi(d)$ is odd. Thus applying the twist $T_d$ alters the parities of $\psi(a_{j_1}), \psi(a_{j_2})$ while leaving all other $\psi$-values, as well as the marking of $H_1(\Sigma_{g,n}, \partial \Sigma_{g,n};\Z)$, fixed. 

If $\psi(\Delta_j)$ is odd, then the parity of $\psi(a_j)$ can be altered by applying $T_{\Delta_j}$; again this leaves the other $\psi$ values and the marking fixed. At this point, there are equalities $\phi(a_i) = \psi(a_i) \pmod 2$ for all relative elements $a_i$. By performing a series of connect--sum moves to the arcs as in the absolute case, we can adjust the integral values of $\psi(a_i)$ in increments of $2$ while preserving the homology classes. At the end of this procedure, we have produced a distinguished geometric basis $\mathcal B'$ such that $\phi(b) = \psi(b')$ for all pairs of matching elements $b \in \mathcal B, b' \in \mathcal B'$, and such that $\mathcal B$ and $\mathcal B'$ determine the same marking. Letting $f \in \Mod(\Sigma_{g,n})$ be such that $f(\mathcal B) = \mathcal B'$, we find that $f \in \mathcal I^{rel}(\Sigma_{g,n})$ and $f \cdot \phi = \psi$ as required.
\end{proof}

\subsection{Concluding the proof of Theorems \ref{theorem:main} and \ref{theorem:main2}} We are now in a position to complete the proof of Theorems \ref{theorem:main} and \ref{theorem:main2}. As discussed in the introduction, there is a homomorphism $p_\phi: \Mod(\Sigma_{g,n})[\phi] \to \PMod(\Sigma_g,Z)[\phi]$, and so it suffices to establish the result in the setting of $\Mod(\Sigma_{g,n})[\phi]$. This follows from Proposition \ref{proposition:piece1} below.

\begin{proposition}\label{proposition:piece1}
Let $\phi$ be a relative framing of $\Sigma_{g,n}$. Then there is an equality
\[
\Psi^{rel}(\Mod(\Sigma_{g,n}[\phi])) = \ker (\Theta_\phi).
\]
\end{proposition}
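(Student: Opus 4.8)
The plan is to prove the two inclusions separately. The containment
$\Psi^{rel}(\Mod(\Sigma_{g,n})[\phi]) \le \ker(\Theta_\phi)$ is essentially already in hand: by Lemma \ref{proposition:induced} the crossed homomorphism $\overline{\Delta_\phi}$ factors through $\Theta_\phi$, and by Remark \ref{remark:kernel} any framing-preserving mapping class lies in $\ker(\overline{\Delta_\phi})$. (One must pass through the map $p_\phi: \Mod(\Sigma_{g,n})[\phi] \to \PMod(\Sigma_g,Z)[\phi]$, noting that the diagram commutes so $\Theta_\phi$ computed relatively agrees with its absolute counterpart under $p_*$, as in the statement of Theorem \ref{theorem:main2}.) So the substance of the proposition is the reverse inclusion: every $A \in \ker(\Theta_\phi)$ is realized as $\Psi^{rel}(f)$ for some $f$ stabilizing $\phi$.

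For surjectivity, I would proceed as follows. First, because $\Psi^{rel}: \Mod(\Sigma_{g,n}) \to \PAut(H_1(\Sigma_{g,n}, \partial\Sigma_{g,n};\Z))$ is surjective (the mapping class group realizes every pure relative automorphism — this is standard from the action on a geometric basis), choose \emph{any} mapping class $f_0$ with $\Psi^{rel}(f_0) = A$. There is no reason $f_0$ should preserve $\phi$; it produces a second framing $\psi := f_0 \cdot \phi$. The key point is that since $A \in \ker(\Theta_\phi)$, the framing $\psi$ must agree with $\phi$ on exactly the data controlled by Proposition \ref{proposition:Taction}. Concretely, $f_0$ acts trivially on relative homology, so it fixes the boundary framing $\delta$ and preserves the marking of $H_1(\Sigma_{g,n}, \partial\Sigma_{g,n};\Z)$; I would then check that the hypothesis $\Theta_\phi(A) = 0$ is exactly what forces both $\Arf(\phi) = \Arf(\psi)$ \emph{and} the existence of a distinguished geometric basis $\mathcal B$ with $\vec q(\mathcal B,\phi) = \vec q(\mathcal B,\psi)$. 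The Arf invariants agree because $A$ preserves the mod-$2$ intersection data that computes Arf, and the $q$-vector condition is precisely the statement that the change-in-winding-number cocycle vanishes on $A$.

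With those two hypotheses verified, Proposition \ref{proposition:Taction} supplies a mapping class $h \in \mathcal I^{rel}(\Sigma_{g,n})$ with $h \cdot \psi = \phi$, equivalently $h \cdot (f_0 \cdot \phi) = \phi$. Then $f := h f_0$ stabilizes $\phi$, so $f \in \Mod(\Sigma_{g,n})[\phi]$, while $\Psi^{rel}(f) = \Psi^{rel}(h)\Psi^{rel}(f_0) = \mathrm{Id} \cdot A = A$ since $h$ lies in the relative Torelli group $\ker(\Psi^{rel})$. This exhibits $A$ in the image, completing the reverse inclusion and hence the proof.

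The main obstacle I anticipate is the translation step in the middle paragraph: carefully establishing that the single algebraic condition $\Theta_\phi(A) = 0$ is equivalent to the \emph{pair} of geometric conditions (equal Arf invariant together with a common-basis $q$-vector match) demanded by Proposition \ref{proposition:Taction}. This requires unwinding the definition of $\Theta_\phi$ as the induced change-in-mod-$2$-winding-number cocycle and relating its vanishing on $A$ to the winding numbers $\phi(b)$ versus $\psi(b) = \phi(f_0^{-1}(b))$ across a geometric basis. One delicate point is that Proposition \ref{proposition:Taction} only asks for \emph{some} basis realizing the $q$-vector equality; I would want to verify that a basis adapted to $A$ (or an adjustment of $f_0$ by an element already in the framed mapping class group, which does not change $\Psi^{rel}(f_0)$) can always be arranged so that the hypotheses are literally met, rather than merely met after a further correction. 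Handling the distinction between even and odd signatures $\kappa_i$ — where $\Theta_\phi$ behaves qualitatively differently per Section \ref{section:more} — will likely require splitting into cases here as well.
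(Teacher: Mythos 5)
Your proposal is correct and follows essentially the same route as the paper: lift $A$ to any mapping class $f_0$, verify the two hypotheses of Proposition \ref{proposition:Taction} for $\phi$ and $f_0\cdot\phi$, and correct $f_0$ by the resulting element of $\mathcal I^{rel}(\Sigma_{g,n})$. The ``main obstacle'' you anticipate dissolves more easily than you fear, and with a slightly different division of labor than you propose: the equality $\Arf(\phi)=\Arf(f_0\cdot\phi)$ has nothing to do with $A\in\ker(\Theta_\phi)$ --- it holds for \emph{every} mapping class by Proposition \ref{proposition:ModArfaction} --- while the $q$-vector condition $\vec q(\mathcal B,\phi)=\vec q(\mathcal B,f_0\cdot\phi)$ holds for \emph{every} distinguished geometric basis at once, since $\Theta_\phi(A)=0$ unwinds via Lemmas \ref{proposition:oddtrace} and \ref{proposition:induced} to $\phi(f_0(c))\equiv\phi(c)\pmod 2$ for all simple closed curves $c$; so no adapted basis, no adjustment of $f_0$, and no case split on the parities of the $\kappa_i$ is needed.
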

\begin{proof}
The containment $\Psi^{rel}(\Mod(\Sigma_{g,n})[\phi]) \le \ker(\Theta_\phi)$ is clear from the properties of $\Theta_\phi$ established in Section \ref{section:Xhom} (c.f. Remark \ref{remark:kernel} and Lemma \ref{proposition:induced}).

To establish the opposite containment, let $A \in \ker(\Theta_\phi)$ be given; then $A$ lifts to a mapping class $f \in \Mod(\Sigma_{g,n})$. By Lemmas \ref{proposition:induced} and \ref{proposition:oddtrace}, $\phi(f(c)) = \phi(c) \pmod 2$ for every simple closed curve $c \subset \Sigma_g$. In particular, if $\mathcal B$ is any distinguished geometric basis, then $\vec q(\mathcal B, \phi) = \vec q(\mathcal B, f \cdot \phi)$.

As $\Mod(\Sigma_{g,n})$ preserves the Arf invariant (Proposition \ref{proposition:ModArfaction}), we have that $\Arf(\phi) = \Arf(f \cdot \phi)$. Now since $\phi$ and $f \cdot \phi$ restrict to the same framing of $\partial \Sigma_{g,n}$, we can apply Proposition \ref{proposition:Taction} to see that there is an element $g \in \mathcal I^{rel}(\Sigma_{g,n})$ such that $g \cdot (f \cdot \phi) = \phi$. The mapping class $gf$ satisfies $\Psi^{rel}(gf) = \Psi^{rel}(f) = A$ and $gf \cdot \phi = \phi$ as required.
\end{proof}

\section{More on $\Theta_\phi$ and its kernel}\label{section:more}
Theorems \ref{theorem:main} and \ref{theorem:main2} give a uniform description of the homological action of framed mapping class groups valid for arbitrary framings. We find it interesting that despite this, the crossed homomorphism $\Theta_\phi$ itself behaves quite differently depending on some arithmetic properties of $\phi$. For $i = 1, \dots, n$, define
\[
\kappa_i = -1 - \phi(\Delta_i);
\]
note that in the case where $\phi$ is induced from a differential in the stratum $\Omega \mathcal M_g(\sing)$, this agrees with the definition of $\kappa_i$ given there. Define $r = \gcd(\kappa_i)$.

To conclude this note, we offer below in Proposition \ref{proposition:summary} a more explicit description of $\ker(\Theta_\phi)$ which makes apparent the different structure that appears in the regimes of $r$ even and $r$ odd. Recall from Section \ref{section:structure} that by definition,
\[
\RelAut(H_1(\Sigma_g,Z;\Z)) = \Hom(\widetilde H_0(Z;\Z), H_1(\Sigma_g; \Z)).
\]
We define the element $v_{\sing} \in H_0(Z;\Z)$ by
$
v_{\sing} := \sum_{i=1}^n \kappa_i p_i.
$
We can use $v_{\sing}$ to define a homomorphism 
\[
v_{\sing}^*: \RelAut(H_1(\Sigma_g,Z;\Z)) \to H^1(\Sigma_g;\Z/2\Z)
\]
by the formula
\[
v_{\sing}^*(A)(x) = \pair{A(v_{\sing}),x} \pmod 2.
\]
Note that $v_{\sing} \pmod 2$, and hence $v_{\sing}^*$, is trivial if all $\kappa_i \in \sing$ are even.

\begin{lemma}\label{lemma:ptpush}
Restricted to $\RelAut(H_1(\Sigma_g,Z;\Z))$, there is an equality
\[
\Theta_\phi = v_{\sing}^*.
\]
In particular, if $r$ is even, then $\Theta_\phi$ is identically zero on $\RelAut(H_1(\Sigma_g, Z;\Z))$. 
\end{lemma}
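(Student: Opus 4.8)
The plan is to exploit the fact that $\RelAut(H_1(\Sigma_g,Z;\Z))$ is abelian and that $\Theta_\phi$ restricts to an honest homomorphism on it, so that the claimed equality can be checked on a generating set. First I would observe that the coefficient module $H^1(\Sigma_g;\Z/2\Z)$ carries a $\PAut(H_1(\Sigma_g,Z;\Z))$-action factoring through the quotient $\Sp(2g,\Z)$ of \eqref{equation:aut} (and its reduction mod $2$). Since $\RelAut(H_1(\Sigma_g,Z;\Z))$ is precisely the kernel of this quotient map, it acts trivially on $H^1(\Sigma_g;\Z/2\Z)$. Consequently the crossed-homomorphism identity $\Theta_\phi(AB) = \Theta_\phi(A) + A\cdot \Theta_\phi(B)$ collapses to additivity for $A, B \in \RelAut(H_1(\Sigma_g,Z;\Z))$, so $\Theta_\phi|_{\RelAut(H_1(\Sigma_g,Z;\Z))}$ is a genuine homomorphism of abelian groups; and $v_{\sing}^*$ is manifestly one, being evaluation of a $\Hom$-group element against the fixed class $v_{\sing}$. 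Thus it suffices to compare the two maps on generators.

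Next I would pin down a convenient generating set. The left-hand column of the diagram in the proof of Lemma \ref{proposition:induced} exhibits $\RelAut(H_1(\Sigma_g,Z;\Z))$ as the image $\Psi^{rel}(\PB(\Sigma_g,Z))$, and by Proposition \ref{proposition:level2gens} the group $\PB(\Sigma_g,Z)$ is generated by point--push maps $T_{a_i}T_{a_i'}^{-1}$, where $a_i\cup a_i'$ cobounds an annulus around $p_i$. Thus $\RelAut(H_1(\Sigma_g,Z;\Z))$ is generated by the elements $A_i := \Psi^{rel}(T_{a_i}T_{a_i'}^{-1})$, and it remains to verify $\Theta_\phi(A_i) = v_{\sing}^*(A_i)$ for each such generator.

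The two sides are then computed directly. Since $\Theta_\phi$ is induced from $\overline{\Delta_\phi}$ via $\Psi^{rel}$, formula \eqref{formula:ptpush} gives $\Theta_\phi(A_i)([c]) = \kappa_i \pair{[a_i],c} \pmod 2$. For the other side I would identify $A_i$ explicitly as an element of $\Hom(\widetilde H_0(Z;\Z), H_1(\Sigma_g;\Z))$: geometrically, pushing $p_i$ around the core of the annulus drags precisely the endpoints lying at $p_i$, so $A_i$ sends a relative boundary class $\sum_j n_j p_j$ to $n_i [a_i]$. Although $v_{\sing} = \sum_j \kappa_j p_j$ lies a priori in $H_0(Z;\Z)$ rather than the reduced group, its total degree is $\sum_j \kappa_j = 2g-2 \equiv 0 \pmod 2$, so modulo $2$ it does define a class in $\widetilde H_0(Z;\Z/2\Z)$ and $A_i(v_{\sing}) = \kappa_i [a_i] \pmod 2$. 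Hence $v_{\sing}^*(A_i)([c]) = \pair{A_i(v_{\sing}),c} = \kappa_i \pair{[a_i],c} \pmod 2$, matching the computation of $\Theta_\phi(A_i)$.

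I expect the one genuinely delicate point to be this last identification of the point--push $A_i$ inside $\Hom(\widetilde H_0(Z;\Z), H_1(\Sigma_g;\Z))$ — in particular keeping the book-keeping of basepoints and the passage to $\widetilde H_0$ mod $2$ straight, since $v_{\sing}$ only becomes a reduced class after reducing coefficients. Granting the equality $\Theta_\phi = v_{\sing}^*$ on $\RelAut(H_1(\Sigma_g,Z;\Z))$, the final assertion is immediate: if $r = \gcd(\kappa_i)$ is even then every $\kappa_i$ is even, whence $v_{\sing} \equiv 0 \pmod 2$, so $v_{\sing}^*$, and therefore $\Theta_\phi$, vanishes identically on $\RelAut(H_1(\Sigma_g,Z;\Z))$.
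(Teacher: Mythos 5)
Your proposal is correct and follows essentially the same route as the paper: reduce to the point--push generators of $\RelAut(H_1(\Sigma_g,Z;\Z))$ and compare both sides via formula \eqref{formula:ptpush}. The only differences are that you spell out why the crossed homomorphism restricts to an honest homomorphism on $\RelAut$ and why $v_{\sing}$ makes sense as an input mod $2$ despite having nonzero degree -- both points the paper leaves implicit.
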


\begin{proof} We evaluate the expression $\Theta_\phi(A)(x)$ for $A \in \RelAut(H_1(\Sigma_g,Z;\Z))$ and $x \in H_1(\Sigma_g;\Z/2\Z)$ arbitrary. By Proposition \ref{proposition:induced}, we are free to choose $f \in \PMod(\Sigma_g,Z)$ representing our given element $A \in \RelAut(H_1(\Sigma_g,Z;\Z))$, and by the cocycle condition \eqref{equation:cocycle}, it suffices to restrict attention to $A$ a member of some generating set for $\RelAut(H_1(\Sigma_g,Z;\Z))$. 

As
\[
\RelAut(H_1(\Sigma_g,Z;\Z)) \cong \Hom(\widetilde H_0(Z;\Z), H_1(\Sigma_g; \Z)),
\]
we see that $\RelAut(H_1(\Sigma_g,Z;\Z))$ is generated by elements $P_{i,a}$ sending the $i^{th}$ factor to a primitive element $a \in H_1(\Sigma_g;\Z)$ and acting trivially on the remaining factors. Such elements are represented by mapping classes $\Pi_{i,\tilde a}$ given as point-pushes $T_{\tilde a_i}T_{\tilde a_i'}^{-1}$ as in \eqref{formula:ptpush}. Thus we must evaluate $\Delta_\phi(\Pi_{i,a},\tilde c)$ for $\tilde c$ an arbitrary simple closed curve with $[\tilde c] = c$. Formula \eqref{formula:ptpush} shows that
\[
\Delta_\phi(\Pi_{i,\tilde a},\tilde c) = v_{\sing}^*(P_{i,a}(c)),
\]
and the result holds in general by linearity.
\end{proof}

To better understand the case of $r$ even, we observe that homological coherence (Lemma \ref{lemma:HJ}.\ref{HC}) implies that if $\phi$ is an framing of $(\Sigma_g, Z)$ with $r$ even, then the assignment 
\begin{equation}\label{classicalspin}
q(x) = \phi(\tilde x) + 1 \pmod 2,
\end{equation}
where $\tilde x$ is a simple closed curve with $[\tilde x] = x$ in $H_1(\Sigma_g;\Z/2\Z)$, determines a classical spin structure. See also Remark \ref{remark:stronger} or \cite[Theorem 1A]{johnson_spin} for more details.

We let $\Sp(2g,\Z)[q] \le \Sp(2g,\Z)$ denote the stabilizer of a classical spin structure $q$. This can be extended to define the subgroup 
\[
\PAut(H_1(\Sigma_g, Z;\Z))[q] \le \PAut(H_1(\Sigma_g, Z;\Z))
\]
as the full preimage of $\Sp(2g,\Z)[q]$ in $\PAut(H_1(\Sigma_g, Z;\Z))$.

\begin{lemma} \label{lemma:reven}
Suppose that $r$ is even, and define $q:= \phi \pmod 2+1$ as in \eqref{classicalspin}. Then there is an equality
\[
\ker(\Theta_\phi) = \PAut(H_1(\Sigma_g, Z;\Z))[q].
\]
\end{lemma}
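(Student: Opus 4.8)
The plan is to exploit the fact that, when $r$ is even, both $\ker(\Theta_\phi)$ and $\PAut(H_1(\Sigma_g, Z;\Z))[q]$ are \emph{full preimages} of subgroups of $\Sp(2g,\Z)$ under the projection in the short exact sequence \eqref{equation:aut}, so that it suffices to compare the images of these two groups in $\Sp(2g,\Z)$.

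First I would record that $\ker(\Theta_\phi)$ contains all of $\RelAut(H_1(\Sigma_g, Z;\Z))$: since $r$ is even, $v_{\sing} \equiv 0 \pmod 2$, so Lemma \ref{lemma:ptpush} gives $\Theta_\phi|_{\RelAut} = v_{\sing}^* \equiv 0$. Next, observing that the elements of $\RelAut = \Hom(\widetilde H_0(Z;\Z), H_1(\Sigma_g;\Z))$ are the identity on the absolute summand $H_1(\Sigma_g;\Z)$ and hence act trivially on its mod-$2$ dual $H^1(\Sigma_g;\Z/2\Z)$, I would use the crossed-homomorphism identity $\Theta_\phi(gn) = \Theta_\phi(g) + g\cdot\Theta_\phi(n)$ to check that $\Theta_\phi$ is constant on cosets of the normal subgroup $\RelAut$ and therefore descends to a crossed homomorphism $\bar\Theta_\phi \colon \Sp(2g,\Z) \to H^1(\Sigma_g;\Z/2\Z)$. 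Consequently $\ker(\Theta_\phi)$ is exactly the preimage of $\ker(\bar\Theta_\phi)$, while by definition $\PAut(H_1(\Sigma_g, Z;\Z))[q]$ is the preimage of $\Sp(2g,\Z)[q]$.

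The heart of the argument is then to identify $\bar\Theta_\phi$ with the spin crossed homomorphism $\hat q$. Because $r$ is even, Remark \ref{remark:stronger} (equivalently \eqref{classicalspin}) guarantees that $q(x) = \phi(\widetilde x) + 1 \pmod 2$ is a well-defined function of the class $x \in H_1(\Sigma_g;\Z/2\Z)$. Lifting an arbitrary $A \in \Sp(2g,\Z)$ to a mapping class $f \in \PMod(\Sigma_g,Z)$ and using $\Theta_\phi(f)([c]) = \phi(f(c)) - \phi(c) \pmod 2$, I would compute
\[
\bar\Theta_\phi(A)([c]) = \phi(f(c)) - \phi(c) = q([f(c)]) - q([c]) = q(A[c]) - q([c]) = \hat q(A)([c]),
\]
so that $\bar\Theta_\phi = \hat q$. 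Since $\hat q(A) = 0$ precisely when $q(Ax) = q(x)$ for all $x$, i.e.\ when $A$ stabilizes $q$, this yields $\ker(\bar\Theta_\phi) = \Sp(2g,\Z)[q]$; taking preimages under \eqref{equation:aut} then gives $\ker(\Theta_\phi) = \PAut(H_1(\Sigma_g, Z;\Z))[q]$.

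The main obstacle—really the only non-formal point—is the descent step: one must verify that $\RelAut$ genuinely acts trivially on the coefficient module $H^1(\Sigma_g;\Z/2\Z)$, for this is what makes $\Theta_\phi$ constant on $\RelAut$-cosets and allows the passage to $\Sp(2g,\Z)$. Once that is confirmed, the identification $\bar\Theta_\phi = \hat q$ and the resulting equality of kernels follow by direct substitution into the definitions.
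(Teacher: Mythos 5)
Your proof is correct and follows essentially the same route as the paper's: both directions rest on exactly the two facts you isolate, namely that Remark \ref{remark:stronger} makes $q$ well-defined on $H_1(\Sigma_g;\Z/2\Z)$ (giving $\Theta_\phi = \hat q$ on the $\Sp(2g,\Z)$ level) and that Lemma \ref{lemma:ptpush} kills $\Theta_\phi$ on $\RelAut(H_1(\Sigma_g,Z;\Z))$. Your explicit descent to a crossed homomorphism $\bar\Theta_\phi$ on $\Sp(2g,\Z)$ is just a more spelled-out packaging of the paper's two-containment argument (and of the identity $\Theta_\phi = p^*(\hat q)$ recorded in Proposition \ref{proposition:summary}).
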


\begin{proof}
We recall Remark \ref{remark:stronger}: if $r$ is even, then $q := \phi \pmod 2$ is well-defined as a function on $H_1(\Sigma_g;\Z/2\Z)$. Thus $\ker(\Theta_\phi) \le \PAut(H_1(\Sigma_g, Z; \Z))[q]$. To establish the opposite containment, it suffices to show that $\ker(\Theta_\phi)$ contains $\RelAut(H_1(\Sigma_g,Z;\Z))$, and this follows from the second assertion of Lemma \ref{lemma:ptpush}.
\end{proof}

Lemma \ref{lemma:reven} implies that the constraint imposed by $\Theta_\phi$ is ``concentrated'' on the action on absolute homology. For $r$ odd, we find that the opposite is true: Lemma \ref{lemma:ptpush} implies that $\Theta_\phi$ is nontrivial on $\RelAut(H_1(\Sigma_g,Z;\Z))$, while Lemma \ref{lemma:rodd} below shows that no constraints are imposed on the image of $\ker(\Theta_\phi)$ on $\Sp(2g,\Z)$.

\begin{lemma}\label{lemma:rodd}
For $r$ odd, there is a surjection $\ker(\Theta_\phi) \onto \Sp(2g,\Z)$. 
\end{lemma}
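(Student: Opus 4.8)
The plan is to analyze the restriction of the projection $\pi\colon \PAut(H_1(\Sigma_g,Z;\Z)) \to \Sp(2g,\Z)$ from \eqref{equation:aut} to the subgroup $\ker(\Theta_\phi)$, and to show it is onto by a ``lift and correct'' argument. Given $A \in \Sp(2g,\Z)$, I would first choose an arbitrary lift $B \in \PAut(H_1(\Sigma_g,Z;\Z))$ with $\pi(B) = A$; in general $\Theta_\phi(B)$ need not vanish, so the goal is to multiply $B$ by an element of $\ker(\pi) = \RelAut(H_1(\Sigma_g,Z;\Z))$ so as to kill $\Theta_\phi$ without changing the image $A$. The key input is Lemma \ref{lemma:ptpush}, which identifies $\Theta_\phi$ restricted to $\RelAut(H_1(\Sigma_g,Z;\Z))$ with $v_{\sing}^*$; thus it suffices to show that $v_{\sing}^*$ is \emph{surjective} onto $H^1(\Sigma_g;\Z/2\Z)$ when $r$ is odd.

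Granting this surjectivity, I would carry out the correction on the left, which is cleanest because $\RelAut(H_1(\Sigma_g,Z;\Z))$ acts trivially on $H^1(\Sigma_g;\Z/2\Z)$: the coefficient module only records absolute homology, which $\RelAut(H_1(\Sigma_g,Z;\Z))$ fixes pointwise, so the action factors through $\pi$ followed by reduction $\Sp(2g,\Z)\to\Sp(2g,\Z/2\Z)$. Choosing $C \in \RelAut(H_1(\Sigma_g,Z;\Z))$ with $\Theta_\phi(C) = v_{\sing}^*(C) = \Theta_\phi(B)$, the crossed-homomorphism identity gives
\[
\Theta_\phi(CB) = \Theta_\phi(C) + C\cdot \Theta_\phi(B) = \Theta_\phi(B) + \Theta_\phi(B) = 0 \pmod 2,
\]
while $\pi(CB) = \pi(C)\pi(B) = A$ since $C \in \ker(\pi)$. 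Hence $CB \in \ker(\Theta_\phi)$ is the desired preimage of $A$, and surjectivity of $\pi|_{\ker(\Theta_\phi)}$ follows.

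The main work is therefore the surjectivity of $v_{\sing}^*$, and this is where I expect the only genuine subtlety to lie. Using nondegeneracy of the mod $2$ intersection pairing, $H^1(\Sigma_g;\Z/2\Z) \cong \Hom(H_1(\Sigma_g;\Z/2\Z),\Z/2\Z)$ is exhausted by the functionals $x \mapsto \pair{w,x}$ as $w$ ranges over $H_1(\Sigma_g;\Z/2\Z)$, so it suffices to prove that the evaluation $A \mapsto A(v_{\sing})$ (reduced mod $2$) hits every class $w$. The point to check carefully is that $v_{\sing}$, which integrally lies in $H_0(Z;\Z)$ rather than $\widetilde H_0(Z;\Z)$, nonetheless reduces to a nonzero element of $\widetilde H_0(Z;\Z/2\Z)$: its augmentation is $\sum \kappa_i = 2g-2 \equiv 0 \pmod 2$, so $v_{\sing} \bmod 2 \in \widetilde H_0(Z;\Z/2\Z)$, and $v_{\sing} \bmod 2 = \sum_{\kappa_i \text{ odd}} p_i$ is nonzero precisely because $r = \gcd(\kappa_i)$ odd forces some $\kappa_i$ to be odd. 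Since $v_{\sing} \bmod 2$ is a nonzero vector in a $\Z/2\Z$-vector space, for any target $w$ I can pick a functional $\lambda$ on $\widetilde H_0(Z;\Z/2\Z)$ with $\lambda(v_{\sing}\bmod 2) = 1$ and take the mod $2$ homomorphism $\lambda(\cdot)\,w$; finally, freeness of $\widetilde H_0(Z;\Z)$ and $H_1(\Sigma_g;\Z)$ lets me lift this to an honest element of $\RelAut(H_1(\Sigma_g,Z;\Z))$. This establishes that $v_{\sing}^*$ surjects and completes the argument.
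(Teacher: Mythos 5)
Your proof is correct, but it takes a genuinely different route from the paper's. The paper argues geometrically: it shows that the framed mapping class group $\PMod(\Sigma_g,Z)[\phi]$ itself surjects onto $\Sp(2g,\Z)$ by realizing each transvection $T_v$ as a Dehn twist $T_c$ about a curve $c$ with $[c]=v$ and $\phi(c)=0$; the hypothesis that $r$ is odd enters by letting one point-push along a path through an odd-order marked point to correct the parity of $\phi(c')$ (via formula \eqref{formula:ptpush}), after which the connect-sum techniques of Proposition \ref{proposition:Taction} bring the winding number to $0$. You instead work entirely inside $\PAut(H_1(\Sigma_g,Z;\Z))$: you use the extension \eqref{equation:aut} and Lemma \ref{lemma:ptpush} to reduce the statement to surjectivity of $v_{\sing}^*$, which you then verify by linear algebra over $\Z/2\Z$ (correctly handling the point that $v_{\sing}$ only lies in $\widetilde H_0(Z)$ after reduction mod $2$, and that $r$ odd forces $v_{\sing}\bmod 2\neq 0$). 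Your argument is purely algebraic and arguably more self-contained --- it also immediately exhibits the extension $1\to\ker(v_{\sing}^*)\to\ker(\Theta_\phi)\to\Sp(2g,\Z)\to 1$ of Proposition \ref{proposition:summary} --- whereas the paper's argument buys the stronger geometric fact that the surjection is realized by Dehn twists lying in the framed mapping class group. One small bookkeeping caveat: the cocycle convention actually established in the proof of Lemma \ref{proposition:oddtrace} is $\Delta_\phi(fg,c)=\Delta_\phi(f,g(c))+\Delta_\phi(g,c)$, so with a left correction you should choose $C$ with $\Theta_\phi(C)=\Theta_\phi(B)\circ B^{-1}$ rather than $\Theta_\phi(B)$ (or simply correct on the right, where triviality of the $C$-action makes $\Theta_\phi(BC)=\Theta_\phi(B)+\Theta_\phi(C)$); since $v_{\sing}^*$ is surjective onto all of $H^1(\Sigma_g;\Z/2\Z)$, this changes nothing of substance.
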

\begin{proof}
It suffices to show that $\PMod(\Sigma_{g}, Z)[\phi]$ surjects onto $\Sp(2g,\Z)$ via $\Psi^{rel}$. The group $\Sp(2g,\Z)$ is generated by {\em transvections}, automorphisms of the form $T_v(x) = x + \pair{x,v}v$, as $v$ ranges over the set of primitive elements in $H_1(\Sigma_g,\Z)$. Thus it suffices to construct, for such an arbitrary primitive element $v$, a Dehn twist $T_c$ such that $[c] = v$ in $H_1(\Sigma_g;\Z)$ and such that $T_c \in \PMod(\Sigma_{g},Z)[\phi]$. By the twist--linearity formula (Lemma \ref{lemma:HJ}.\ref{TL}), it is sufficient to construct a curve $c$ such that $[c] = v$ and such that $\phi(c) = 0$. 

Let $c'$ be an arbitrary simple closed curve representing $v$. If $\phi(c')$ is even, the techniques of Proposition \ref{proposition:Taction} can be used to construct a curve $c$ with $[c] = [c']= v$ and with $\phi(c) = 0$. Suppose then that $\phi(c')$ is odd. Since $r$ is odd, there is some point $p_i$ for which the associated $\kappa_i$ is odd. Let $\alpha$ be a simple path based at $p_i$ that crosses $c'$ exactly once, and let $\Pi_\alpha$ denote the associated point-pushing map. Then \eqref{formula:ptpush} implies that $\phi(\Pi_\alpha(c'))$ is even (and also $[\Pi_\alpha(c')] = [c']$), and hence the argument above can be applied to $\Pi_\alpha(c)$. 
\end{proof}

We summarize the results of this section as follows.

\begin{proposition}\label{proposition:summary}
For $r$ even, there is an isomorphism
\[
\ker(\Theta_\phi) \cong \Sp(2g,\Z)[q] \ltimes \RelAut(H_1(\Sigma_g, Z;\Z)),
\]
with $q$ the classical spin structure associated to $\phi$. For $r$ odd, $\ker(\Theta_\phi)$ can be described by the (non-split) extension
\begin{equation*}\label{rodd}
1 \to \ker(v_{\sing}^*) \to \ker(\Theta_\phi) \to \Sp(2g,\Z) \to 1.
\end{equation*}
\end{proposition}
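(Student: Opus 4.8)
The plan is to read off both cases directly from the extension \eqref{equation:aut}, using Lemma \ref{lemma:ptpush} to identify $\ker(\Theta_\phi) \cap \RelAut(H_1(\Sigma_g,Z;\Z)) = \ker(v_{\sing}^*)$ and the surjectivity results of the previous lemmas to control the image in $\Sp(2g,\Z)$. Throughout I will exploit that \eqref{equation:aut} is split: a splitting $H_1(\Sigma_g,Z;\Z) \cong H_1(\Sigma_g;\Z) \oplus \widetilde H_0(Z;\Z)$ of abelian groups produces a section $\iota: \Sp(2g,\Z) \to \PAut(H_1(\Sigma_g,Z;\Z))$, identifying $\PAut(H_1(\Sigma_g,Z;\Z))$ with a semidirect product $\Sp(2g,\Z) \ltimes \RelAut(H_1(\Sigma_g,Z;\Z))$. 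Note that $\RelAut(H_1(\Sigma_g,Z;\Z))$, being the kernel of the projection to $\Sp(2g,\Z)$, acts trivially on $H_1(\Sigma_g;\Z/2\Z)$ and hence on the coefficient module $H^1(\Sigma_g;\Z/2\Z)$ of $\Theta_\phi$.

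For $r$ even the statement is essentially immediate. By Lemma \ref{lemma:reven}, $\ker(\Theta_\phi) = \PAut(H_1(\Sigma_g, Z;\Z))[q]$, the full preimage of $\Sp(2g,\Z)[q]$ under \eqref{equation:aut}. Since $r$ is even, Lemma \ref{lemma:ptpush} gives $v_{\sing}^* \equiv 0$, so $\RelAut(H_1(\Sigma_g,Z;\Z)) \le \ker(\Theta_\phi)$; moreover $\iota$ carries the subgroup $\Sp(2g,\Z)[q]$ into its preimage, i.e.\ into $\PAut(H_1(\Sigma_g,Z;\Z))[q] = \ker(\Theta_\phi)$. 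These two subgroups generate $\ker(\Theta_\phi)$ and intersect trivially, giving the internal semidirect product $\ker(\Theta_\phi) \cong \Sp(2g,\Z)[q] \ltimes \RelAut(H_1(\Sigma_g,Z;\Z))$.

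For $r$ odd, exactness of the displayed sequence is again quick: the projection in \eqref{equation:aut} restricts to a surjection $\ker(\Theta_\phi) \onto \Sp(2g,\Z)$ by Lemma \ref{lemma:rodd}, and its kernel is $\ker(\Theta_\phi) \cap \RelAut(H_1(\Sigma_g,Z;\Z)) = \ker(v_{\sing}^*)$ by Lemma \ref{lemma:ptpush}. The substantive claim is non-splitting, which I would recast cohomologically. Writing $\theta := \Theta_\phi \circ \iota : \Sp(2g,\Z) \to H^1(\Sigma_g;\Z/2\Z)$ for the (crossed-homomorphism) restriction, a direct computation in the semidirect product—using the crossed-homomorphism identity for $\Theta_\phi$, the triviality of the $\RelAut$-action on the coefficients, and $\Theta_\phi|_{\RelAut} = v_{\sing}^*$—shows that the complement of $\RelAut(H_1(\Sigma_g,Z;\Z))$ determined by a crossed homomorphism $\sigma: \Sp(2g,\Z) \to \RelAut(H_1(\Sigma_g,Z;\Z))$ is contained in $\ker(\Theta_\phi)$ if and only if $v_{\sing}^* \circ \sigma = \theta$ pointwise. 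Since $v_{\sing}^*$ is a surjective, $\Sp(2g,\Z)$-equivariant map of modules, surjectivity lets one trade pointwise realization for a statement of cohomology classes: such a $\sigma$ exists if and only if $[\theta]$ lies in the image of $(v_{\sing}^*)_* : H^1(\Sp(2g,\Z); \RelAut(H_1(\Sigma_g,Z;\Z))) \to H^1(\Sp(2g,\Z); H^1(\Sigma_g;\Z/2\Z))$, equivalently if and only if the connecting map $\delta([\theta]) \in H^2(\Sp(2g,\Z); \ker v_{\sing}^*)$ of the short exact sequence $0 \to \ker(v_{\sing}^*) \to \RelAut(H_1(\Sigma_g,Z;\Z)) \xrightarrow{v_{\sing}^*} H^1(\Sigma_g;\Z/2\Z) \to 0$ vanishes. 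Thus the extension splits precisely when $\delta([\theta]) = 0$, and I would prove non-splitting by exhibiting $\delta([\theta]) \ne 0$.

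I expect this last step to be the main obstacle. A necessary first reduction is to show $[\theta] \ne 0$: were $\theta$ a coboundary $B \mapsto B\cdot w - w$, surjectivity of $v_{\sing}^*$ would lift $w$ to some $u$ and the coboundary $\sigma(B) = B\cdot u - u$ would realize $\theta$, forcing a splitting. So I would first pin down $\theta$ concretely—on a transvection $t_v$ realized by a Dehn twist $T_c$ with $[c] = v$, twist-linearity (Lemma \ref{lemma:HJ}) yields $\theta(t_v)([d]) = \pair{d,v}\,\phi(c) \pmod 2$—and then compare $[\theta]$ against the image of $(v_{\sing}^*)_*$. Carrying this out requires input on the low-degree cohomology of $\Sp(2g,\Z)$ with coefficients in the standard symplectic module and its mod-$2$ reduction; isolating exactly the right (non)vanishing statement, or else producing a more hands-on cocycle obstruction that sidesteps the general cohomology computation, is the crux of the argument.
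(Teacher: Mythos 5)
Your argument matches the paper's: the proposition is stated there as a summary of the preceding lemmas, with the $r$ even case following from Lemma \ref{lemma:reven} together with the splitting of the extension \eqref{equation:aut} (the section $\iota$ restricts to a section over $\Sp(2g,\Z)[q]$, and $\RelAut$ lies in the kernel since $v_{\sing}^*\equiv 0$), and the $r$ odd exact sequence following from Lemma \ref{lemma:rodd} (surjectivity onto $\Sp(2g,\Z)$) and Lemma \ref{lemma:ptpush} (identification of the kernel with $\ker(v_{\sing}^*)$). All of that you reproduce correctly. The one place you go beyond the paper is the parenthetical ``non-split'' claim: the paper asserts this without proof, and your cohomological reduction --- splittings correspond to cocycles $\sigma$ with $v_{\sing}^*\circ\sigma=\theta$, hence the extension splits iff $\delta([\theta])=0$ in $H^2(\Sp(2g,\Z);\ker v_{\sing}^*)$ --- is a sound framework (the trade between pointwise realization and cohomology classes via surjectivity of $v_{\sing}^*$ is correct), but as you acknowledge you do not establish the nonvanishing, so that claim remains unproven in your write-up exactly as it does in the paper. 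One small caution for that step: $\Psi^{rel}(T_c)$ is only a lift of the transvection $t_{[c]}$, not necessarily $\iota(t_{[c]})$ itself, so your formula for $\theta(t_v)$ is only valid modulo the image of $v_{\sing}^*$; this is harmless for computing $[\theta]$ modulo $\mathrm{im}\,(v_{\sing}^*)_*$ but should be tracked if you pursue the obstruction computation.
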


\bibliographystyle{alpha}
\bibliography{library}

\end{document}